\def\@strippedMR{} \def\@scanforMR#1#2#3\endscan{%
  \ifx#1M\ifx#2R\def\@strippedMR{#3}%
  \else\def\@strippedMR{#1#2#3}%
  \fi\fi} \renewcommand\MR[1]{\relax \ifhmode\unskip\spacefactor3000
  \space\fi \@scanforMR#1\endscan
  MR\MRhref{\@strippedMR}{\@strippedMR}} \makeatother
\newcommand{\R}{\mathbb{R}} \newcommand{\Z}{\mathbb{Z}}
\newcommand{\N}{\mathbb{N}}
\theoremstyle{plain} \newtheorem{thm}{Theorem}[section]
\newtheorem{lem}[thm]{Lemma} 
\newtheorem{pro}[thm]{Proposition}
\theoremstyle{definition} \newtheorem{dfn}[thm]{Definition}
\theoremstyle{remark} \newtheorem{rem}{Remark}
\DeclareMathOperator{\supp}{supp}
 \newcommand{\lb}{\langle}
\newcommand{\rb}{\rangle} \newcommand{\ls}{\lesssim}
\newcommand{\gs}{\gtrsim}
\begin{document}

\title[The Boson star equation]%
{The Boson star equation with \\ initial data of low regularity}
\author[S.~Herr]{Sebastian Herr}
\author[E.~Lenzmann]{Enno Lenzmann}
\thanks{
The first author acknowledges support from the German Research
Foundation, Collaborative Research Center 701.
}
\subjclass[2000]{35Q55}

\address{Universit\"at Bielefeld, Fakult\"at f\"ur Mathematik,
  Postfach 10 01 31, D-33501 Bielefeld, Germany}
\email{herr@math.uni-bielefeld.de}
\address{Universit\"at Basel, Departement f\"ur Mathematik und Informatik, Mathematisches Institut, Rheinsprung 21,
CH-4051 Basel, Schweiz} \email{enno.lenzmann@unibas.ch}
\begin{abstract}
The Cauchy problem for the $L^2$-critical boson star equation with initial data of low regularity in spatial dimension $d=3$ is studied. Local well-posedness in $H^s$ for $s > 1/4$ is proved. Moreover, for radial initial data, local well-posedness is established in $H^s$ for $s > 0$. Both results are shown to be almost optimal by providing complementary ill-posedness results.
\end{abstract}
\keywords{}
\maketitle
\section{Introduction and Main Results}\label{sect:intro_main}
\noindent
We consider the initial value problem
\begin{equation}\label{eq:boson-star}
  \begin{split}
    i\partial_t u & = \sqrt{-\Delta+m^2} \, u- (|x|^{-1} \ast |u|^2)u \quad \text{in }(-T,T)\times \R^3,\\
    u(0,\cdot)&=\phi \in H^s(\R^3).
  \end{split}
\end{equation}
Here $\sqrt{-\Delta+m^2}$ is defined via its symbol $\sqrt{\xi^2+m^2}$ in Fourier space, where the constant $m \geq 0$ is a physical mass parameter, and the symbol $\ast$ denotes convolution in $\R^3$. The nonlinear dispersive evolution problem \eqref{eq:boson-star} arises as an effective equation describing the dynamics and gravitational collapse of relativistic boson stars; see \cite{MS,ES07,LY87,FJL07} and references therein. Given this physical background, we shall also refer to equation \eqref{eq:boson-star} as {\em the boson star equation} in the following.

We recall that the boson star equation exhibits the following conserved quantities of energy and $L^2$-mass, which are given by
$$
E(u(t)) = \frac  1 2 \int_{\R^3} \overline{u} \sqrt{-\Delta+m^2} \, u \, dx - \frac{1}{4} \int_{\R^3} (|x|^{-1} \ast |u|^2) |u|^2 \, dx ,
$$
$$
M(u(t)) = \int_{\R^3} |u|^2 \, dx.
$$
From these conservation laws, we see that the Sobolev space $H^{1/2}(\R^3)$ serve as the energy space for problem \eqref{eq:boson-star}. Furthermore, in the case of vanishing mass parameter $m=0$ in \eqref{eq:boson-star}, we have the scaling symmetry $$u(t,x) \mapsto u_\lambda(t,x) = \lambda^{3/2} u(\lambda t, \lambda x)$$ for fixed $\lambda > 0$. Clearly, the symmetry leaves the $L^2$-mass $M(u(t)) = M(u_\lambda(t))$ invariant. In this sense, the boson star equation exhibits the delicate feature of {\em $L^2$-criticality}.

Our aim is to determine minimal regularity requirements on the initial data in the Sobolev scale $H^s(\R^3)$ and $H^s_{rad}(\R^3)$ (subspace of radial functions), respectively, such that the initial value problem for the boson star equation is locally well-posed. Let us state our first main result.
\begin{thm}\label{thm:lwp} Let $m \geq 0$.
  \begin{enumerate}
\item Let $s> 1/4$. The initial
  value problem \eqref{eq:boson-star} is locally well-posed for
  initial data in $H^s(\R^3)$.
\item Let $s>0$. The initial
  value problem \eqref{eq:boson-star} is locally well-posed for initial data in $H^s_{rad}(\R^3)$.
\end{enumerate}
\end{thm}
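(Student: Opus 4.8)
\emph{Overall strategy.} The plan is to solve \eqref{eq:boson-star} by a contraction mapping argument for the Duhamel map
$$\Phi(u)(t)=e^{-it\sqrt{-\Delta+m^2}}\phi+i\int_0^t e^{-i(t-s)\sqrt{-\Delta+m^2}}\bigl[(|x|^{-1}\ast|u|^2)\,u\bigr](s)\,ds$$
on a closed ball of a space-time space $X^s_T$ adapted to the half-wave propagator $e^{-it\sqrt{-\Delta+m^2}}$ and to the time interval $[-T,T]$: a suitable intersection of $C([-T,T];H^s(\R^3))$ with Strichartz-type norms, possibly refined by Bourgain-type modulation weights $\langle\tau-\sqrt{\xi^2+m^2}\rangle^{b}$ built on the dispersion relation of \eqref{eq:boson-star}. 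This reduces the whole proof to: (a) the linear estimates — the homogeneous Strichartz estimates, the inhomogeneous (retarded) Strichartz estimates obtained from them via the Christ--Kiselev lemma, and the energy inequality in $H^s$; and (b) a trilinear estimate controlling the Hartree nonlinearity in the space in which the Duhamel term is measured,
$$\bigl\|(|x|^{-1}\ast(u\overline v))\,w\bigr\|\lesssim T^{\theta}\,\|u\|_{X^s_T}\|v\|_{X^s_T}\|w\|_{X^s_T}\qquad(\theta>0).$$
Since the hypotheses $s>1/4$ and $s>0$ are open and the problem is subcritical in both regimes, one has room for off-endpoint exponents and for the factor $T^{\theta}$ that turns $\Phi$ into a contraction for $T$ small; uniqueness, persistence of regularity, Lipschitz continuous dependence on the data and the blow-up alternative then follow from the usual arguments.

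\emph{Linear estimates.} For the propagator $e^{-it\sqrt{-\Delta+m^2}}$ I would use the wave-type Strichartz estimates in $\R^{1+3}$,
$$\bigl\|e^{-it\sqrt{-\Delta+m^2}}\phi\bigr\|_{L^q_tL^r_x}\lesssim\|\phi\|_{\dot H^{\mu}},\qquad\tfrac1q+\tfrac1r\le\tfrac12,\quad\mu=\tfrac32-\tfrac3r-\tfrac1q,$$
whose implicit constant can be taken independent of $m\ge0$ thanks to the uniform high-frequency behaviour of the symbol $\sqrt{\xi^2+m^2}$; together with the transference and embedding properties of $X^s_T$ these give Strichartz bounds for general elements of the resolution space.

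\emph{The trilinear estimate: the main obstacle.} In Fourier variables one has
$$\mathcal F\!\left[(|x|^{-1}\ast(u\overline v))\,w\right](\xi)=c\int_{\xi_1-\xi_2+\xi_3=\xi}|\xi_1-\xi_2|^{-2}\,\widehat u(\xi_1)\,\overline{\widehat v(\xi_2)}\,\widehat w(\xi_3),$$
so two effects compete: the multiplier $|\xi_1-\xi_2|^{-2}$ coming from $\widehat{|x|^{-1}}$ smooths strongly away from the diagonal $\xi_1\approx\xi_2$, while on the diagonal the factor $u\overline v$ carries only low output frequency, so the singularity is integrable after a Hardy--Littlewood--Sobolev step. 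A direct combination of Hardy--Littlewood--Sobolev inequalities with the wave-Strichartz estimates above already produces the trilinear estimate for $s>1/3$. Pushing down to the full range $s>1/4$ requires exploiting the bilinear structure of $u\overline v$: after dyadic localization of the frequencies $N_1,N_2,N_3$ and of the modulations, one plays the decay of $|\xi_1-\xi_2|^{-2}$ against a bilinear Strichartz estimate for $e^{-it\sqrt{-\Delta+m^2}}$ (for transversal frequency interactions) and against the modulation weights of $X^s_T$ (in the near-resonant regimes), and sums the resulting dyadic pieces. I expect this frequency-and-modulation case analysis to be the longest and most delicate part of the argument.

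\emph{The radial case and optimality.} For part (ii) one runs exactly the same scheme but feeds in the Strichartz estimates available for radial functions on $\R^3$, which hold on a strictly larger range of exponents — in particular weighted, almost-$L^2_tL^\infty_x$ type bounds — because radial symmetry rules out the anisotropic, Knapp-type concentration that obstructs them. With this stronger linear input the trilinear estimate now closes all the way down to $s>0$, i.e. essentially the scaling-critical regularity. Finally, the thresholds $s>1/4$ (general data) and $s>0$ (radial data) are essentially sharp: a resonant construction in the complementary ranges shows that the data-to-solution map cannot be smooth — indeed fails to be locally uniformly continuous — which both obstructs the above contraction scheme and yields the ill-posedness results announced in the abstract.
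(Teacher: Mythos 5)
Your overall scheme -- a Duhamel fixed point in a Bourgain/Strichartz-type space adapted to the half-wave propagator, standard linear $X^{s,b}$-estimates, and a trilinear bound with a factor $T^{\theta}$ -- is exactly the route the paper takes (after reducing to $m=0$ via equivalence of the modulation weights $\lb\tau+|\xi|\rb\sim\lb\tau+\sqrt{m^2+|\xi|^2}-m\rb$). However, the proposal stops precisely where the proof begins: the trilinear estimates at the claimed thresholds are announced rather than proved, and the heuristics you give do not substantiate why the scheme closes exactly at $s>1/4$ and at $s>0$. In the paper these two numbers come from specific quantitative bilinear inputs that your sketch does not identify. The worst interaction in $(|x|^{-1}\ast(u\overline v))\,w$ occurs when the two factors inside the convolution have comparable high frequency $\lambda$ while the output frequency $\mu$ of $u\overline v$ is small; since the output of $u\overline v$ is $\xi_1-\xi_2$, this is a \emph{parallel} (same direction, same magnitude) interaction, so the ``bilinear Strichartz estimate for transversal frequency interactions'' you invoke does not reach it. What is actually used for general data is the ball-localized Strichartz bound $\|P_{B_\mu}u\|_{L^4}\ls\mu^{1/4}\|u\|_{\frac14,b}$ (Sterbenz), which by almost orthogonality over $\mu$-cubes yields $\|P_\mu(\widetilde u_1\widetilde u_2)\|_{L^2}\ls\mu^{1/2}\|u_1\|_{\frac14,b}\|u_2\|_{\frac14,b}$; it is this $\mu^{1/2}$ gain, played against the $|\xi|^{-2}$ symbol of the Coulomb potential (and against the dualized piece where $\lb D\rb^s|D|^{-2}$ lands on the other bilinear block), that makes the dyadic sums converge for $s>1/4$. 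For radial data the improvement is not a wider range of \emph{linear} radial Strichartz exponents, as you suggest, but Sterbenz's \emph{bilinear} radial estimate $\|P_\mu(P_\lambda\widetilde u_1P_\lambda\widetilde u_2)\|_{L^2}\ls\mu\,\|u_1\|_{0,b}\|u_2\|_{0,b}$ for $\mu\ls\lambda$, i.e.\ a full power of $\mu$ in the low-output-frequency regime; this extra half power (together with interpolation to reach some $b'>-\tfrac12$ and extract $T^{\delta}$) is what lets the estimate close for every $s>0$.

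Without these two low-output-frequency bilinear gains, your argument as written only yields the easier range you yourself estimate from plain Hardy--Littlewood--Sobolev plus Strichartz, so the heart of the theorem is missing, even though the surrounding architecture (choice of resolution space, $T^{\theta}$ contraction, uniqueness in the resolution space, persistence, and smooth/analytic -- not merely Lipschitz -- dependence on the data) is the standard one and matches the paper. Two smaller points: the uniformity in $m\ge0$ is handled in the paper simply by the norm equivalence above, so no uniform-in-$m$ Strichartz constants are needed; and the sharpness discussion you append conflates the $C^3$-failure results (for $s<1/4$, resp.\ $s<0$) with the failure of uniform continuity, which in the paper is proved only for radial $L^2$ data via solitary waves -- but this does not affect the well-posedness proof itself.
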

In both cases, the notion of local well-posedness includes existence of solutions up to some time $T>0$, uniqueness of solutions in a certain subspace, persistence of initial regularity, and analytic dependence on the initial data.

A first well-posedness result for the boson star equation was obtained by the second author of this paper (see \cite{L07}) in $H^s$ for $s \geq 1/2$ by energy methods. Moreover, it was shown in \cite{L07} that global well-posedness holds in $H^{1/2}$ for initial data sufficiently small in $L^2$. In contrast to this, see \cite{FL07} for the existence of finite-time blowup solutions with smooth initial data that are large in $L^2$. Moreover, we refer the reader to \cite{CO06,CO07,COSS09} for further well-posedness results for the boson star equation with initial data in $H^s$ with $s$ slightly below $1/2$. 

Our second result shows that Theorem \ref{thm:lwp} is essentially sharp in the following sense.
\begin{thm}\label{thm:ill-c3} Let $m \geq 0$.
\begin{enumerate}
\item Let $s<\frac14$ and $T>0$. If the flow map $\phi \mapsto u$ exists (in a small neighbourhood of the origin) as a map from $H^s(\R^3)$ to $C([0,T],H^s(\R^3))$, it fails to be $C^3$ at the origin.
\item Let $s<0$ and $T>0$. If the flow map $\phi \mapsto u$ exists (in a small neighbourhood of the origin) as a map from $H^s_{rad}(\R^3)$ to $C([0,T],H^s_{rad}(\R^3))$, it fails to be $C^3$ at the origin.
\end{enumerate}
\end{thm}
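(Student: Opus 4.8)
The plan is to follow the standard Bourgain-type argument for proving that low-regularity well-posedness fails at the level of $C^3$ regularity of the flow map, exploiting the cubic nature of the nonlinearity. If the flow map $\phi \mapsto u$ were $C^3$ near the origin, then writing $u = u(\phi)$ for initial data $\delta\phi$ with small parameter $\delta$, the third derivative $\partial_\delta^3 u|_{\delta=0}$ would be a bounded trilinear operator from $H^s$ to $C([0,T],H^s)$. By iterating Duhamel's formula starting from the free evolution $u_1(t) = e^{-it\sqrt{-\Delta+m^2}}\phi$, this third variation equals (up to a constant) the term
\[
A_3(t) = \int_0^t e^{-i(t-t')\sqrt{-\Delta+m^2}} \bigl( (|x|^{-1}\ast |u_1(t')|^2) u_1(t') \bigr)\, dt',
\]
so it suffices to exhibit a sequence of data $\phi=\phi_N$ with $\|\phi_N\|_{H^s}\lesssim 1$ for which $\|A_3(T)\|_{H^s} \to \infty$ as $N\to\infty$.

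The key step is the choice of $\phi_N$. For part (i) I would take $\widehat{\phi_N}$ to be (a sum of) a characteristic function of a box of dimensions roughly $N \times N \times 1$ — that is, highly anisotropic, concentrated near a single high frequency $\xi_0$ with $|\xi_0|\sim N$ but thin in the radial direction — normalized so that $\|\phi_N\|_{H^s}\sim 1$, which forces the $L^2$-mass to be of size $\sim N^{-s-1}$ after accounting for the $N^2$ volume. The point of this geometry is that the phase function $\sqrt{\xi^2+m^2}$ is, near a large frequency, essentially the linear function $\xi\mapsto |\xi|$, whose Hessian degenerates in the radial direction; choosing the box aligned so that the three interacting frequencies plus the output frequency nearly cancel the oscillation $\sqrt{|\xi_1|^2+m^2} - \sqrt{|\xi_2|^2+m^2} + \sqrt{|\xi_3|^2+m^2} - \sqrt{|\xi_1-\xi_2+\xi_3|^2+m^2}$ on a large portion of the box, the time integral in $A_3$ produces no decay and one gains a full power of $T$. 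Tracking the powers: the convolution kernel $|x|^{-1}$ has Fourier symbol $\sim |\eta|^{-2}$, which at the relevant transfer frequency $|\eta|\sim N$ (the difference of two of the large frequencies) contributes $N^{-2}$; the trilinear expression involves three copies of $\widehat{\phi_N}$ and the resonant set has measure comparable to the full box, so after balancing the $L^2_\xi$ norm with weight $\langle\xi\rangle^{2s}\sim N^{2s}$ one finds $\|A_3(T)\|_{H^s} \gtrsim T\, N^{\gamma}$ with an exponent $\gamma = \gamma(s)$ that is positive exactly when $s<1/4$. For part (ii) the same scheme applies but radiality forces $\widehat{\phi_N}$ to be supported in a spherical shell of radius $\sim N$ and thickness $\sim 1$, which has volume $\sim N^2$ as well, and one must symmetrize the construction over the sphere; the radial averaging costs and the altered resonance geometry shift the threshold to $s<0$.

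The main obstacle I anticipate is the careful bookkeeping of the resonance function and verifying that the chosen frequency boxes genuinely make $|\Phi(\xi_1,\xi_2,\xi_3)| \lesssim T^{-1}$ on a set of nearly full measure — this requires understanding the precise degeneracy of $\sqrt{\xi^2+m^2}$ along the radial direction at high frequency (and making sure the mass $m$, if positive, does not destroy this for $N$ large, which it does not since $\sqrt{\xi^2+m^2} = |\xi| + O(m^2/|\xi|)$). A secondary technical point is confirming that the lower bound on $\|A_3(T)\|_{H^s}$ is not spoiled by cancellation in the trilinear integrand; this is handled by choosing $\widehat{\phi_N} \geq 0$ and restricting, via a further localization or a limiting/dominated-convergence argument at $t'=0$, to a subregion where the integrand has a definite sign, so that $\|A_3(T)\|_{H^s}$ is bounded below by the $H^s$-norm of an explicit nonnegative function. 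Once the scaling exponent $\gamma(s)$ is pinned down and shown to be positive for $s$ below the stated threshold, the contradiction with boundedness of the third derivative is immediate, completing the proof.
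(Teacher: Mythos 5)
Your overall framework coincides with the paper's: reduce the failure of $C^3$ regularity of the flow map to the failure of the trilinear bound for the third Picard iterate $A_3$, in the spirit of Bourgain and Molinet--Saut--Tzvetkov. However, the counterexample you propose for part (i) does not work, and what breaks is precisely the idea the paper's proof hinges on. First, the geometry: for the phase $\varphi_m(\xi)=m-\sqrt{m^2+|\xi|^2}\approx -|\xi|$, a frequency block of tangential width $w$ at distance $\lambda$ from the origin produces curvature corrections of size $w^2/\lambda$ in the resonance function $r_m(\xi_1,\xi_2,\xi)=\varphi_m(\xi_1)-\varphi_m(\xi_2)+\varphi_m(\xi-\xi_1-\xi_2)-\varphi_m(\xi)$ (the radial components cancel exactly), so near-constancy of the oscillation at a fixed time forces $w\lesssim\lambda^{1/2}$; on your $N\times N\times 1$ plate (tangential width $N$) one has $|r_m|$ as large as $\sim N$ on most of the box, the time integral genuinely oscillates, and your claimed lower bound on $A_3(T)$ fails. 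The paper instead takes $\widehat\phi=\chi_{W^+_\lambda}$ with $W^+_\lambda$ an isotropic cube of side $\mu=\delta\lambda^{1/2}$ centered at $\lambda e_1$. Second, and more decisively, you put the Coulomb transfer frequency at size $N$ and charge $|\eta|^{-2}\sim N^{-2}$. But the potential acts on $u\bar u$, so with data concentrated in a single bump the transfer frequency is $\xi_1+\xi_2$ with $\xi_2$ in the reflected cube $W^-_\lambda$, hence of size $\lesssim\mu\sim\lambda^{1/2}$, and the resulting factor $|\xi_1+\xi_2|^{-2}\gtrsim\lambda^{-1}$ is exactly what lifts the ill-posedness threshold above the scaling-critical exponent $s=0$ to $s=1/4$. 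Indeed, redoing your own bookkeeping (volume $N^2$, normalization $N^{-s-1}$, multiplier $N^{-2}$) gives $\|A_3(T)\|_{H^s}\gtrsim T\,N^{-2s}$, which diverges only for $s<0$; your assertion that the exponent $\gamma(s)$ is positive exactly for $s<1/4$ is not computed and is false for your geometry. With the paper's cube one has $\|\phi\|_{H^s}\sim\mu^{3/2}\lambda^s$ and $|F_t(\xi)|\gtrsim t\mu^4$ on $\tfrac14 W^+_\lambda$, so the trilinear bound would force $t\mu\lesssim\lambda^{2s}$, i.e.\ $t\delta\lesssim\lambda^{2s-1/2}$, failing as $\lambda\to\infty$ precisely when $s<1/4$.

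For part (ii) there is a similar gap: with a shell of thickness $\sim 1$ evaluated at the fixed time $T$, the resonance function is again of size $\sim N$ on the bulk of the interaction set, and symmetrizing over the sphere does not remove this; "altered resonance geometry" is not an argument. The paper's radial counterexample takes $\widehat\phi=\chi_{A_\lambda}$ on the full annulus $\lambda\le|\xi|\le2\lambda$ and, crucially, evaluates at the short time $t=\delta\lambda^{-1}$ (legitimate because the estimate being disproved carries $\sup_{t\in[0,T]}$), so that $|t\,r_m|\lesssim t\lambda\ll1$ holds trivially; the resulting balance $\delta\lesssim\lambda^{2s}$ fails exactly for $s<0$. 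Your positivity device (nonnegative $\widehat\phi$ together with $|e^{itr}-1|/|r|\gtrsim t$ in the non-oscillatory regime) matches the paper and is fine; what is missing is the correct frequency support in part (i) — a cube of side $\lambda^{1/2}$ exploiting the low-frequency singularity of the Coulomb multiplier — and the $N$-dependent short-time choice in part (ii).
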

Ill-posedness results similar to Theorem \ref{thm:ill-c3} have been
proved in \cite[Theorem 2]{MST01} for the Benjamin-Ono equation and in
\cite[pp. 155--158]{B97} for the Korteweg-de Vries equation. Compared to the $L^2$-critical NLS, it is interesting to note that the boson star equation \eqref{eq:boson-star} exhibits ill-posedness for non-radial data (in the sense given above) in a regularity class above the critical space $L^2$. Not so surprisingly, such a phenomenom of ill-posedness above scaling is much more akin to nonlinear wave equations (see \cite{L93}).  

We remark that both Theorem \ref{thm:lwp} and Theorem \ref{thm:ill-c3} remain true in the defocusing case, i.e.\ we may replace $-|x|^{-1}$ by $+|x|^{-1}$ in \eqref{eq:boson-star}.

For radial data in $L^2(\R^3)$ we prove the failure of uniform
continuity on balls, similar to the results in \cite{bkpsv96} for
nonlinear Schr\"odinger and generalized Benjamin-Ono equations:
\begin{thm}\label{thm:ill-unif}
Let $m \geq 0$ and $T>0$. If the flow map $\phi \mapsto u$ exists as a map from $L^2_{rad}(\R^3)$ to $C([0,T],L^2_{rad}(\R^3))$, it fails to be locally uniformly continuous.
\end{thm}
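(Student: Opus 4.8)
To prove Theorem~\ref{thm:ill-unif} the plan is to follow the scheme of Birnir--Kenig--Ponce--Scialom--Vega \cite{bkpsv96}: I would exhibit two sequences of radial initial data that lie in a fixed ball of $L^2(\R^3)$, that converge to one another in $L^2$, but whose solutions separate by a fixed amount at time $T$. The mechanism is to combine the solitary waves of \eqref{eq:boson-star} with the $L^2$-criticality of the equation.

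First, recall (see \cite{L07,FL07}) that for $m\ge 0$ and every sufficiently large $\mu>0$ there is a radial ground state $Q_\mu\in\bigcap_{s\ge 0}H^s_{rad}(\R^3)$ solving $\sqrt{-\Delta+m^2}\,Q_\mu+\mu Q_\mu=(|x|^{-1}\ast|Q_\mu|^2)Q_\mu$, so that $u_\mu(t,x):=e^{i\mu t}Q_\mu(x)$ is a global solution of \eqref{eq:boson-star}. Setting $R_\mu(y):=\mu^{-3/2}Q_\mu(y/\mu)$, a direct computation shows that $R_\mu$ solves the same ground state equation but with $m/\mu$ in place of the mass parameter $m$, and that $\|R_\mu\|_{L^2}=\|Q_\mu\|_{L^2}$. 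Consequently, as $\mu\to\infty$ one has $R_\mu\to Q$ in $L^2(\R^3)$, where $Q$ denotes the massless ground state, and $c_0:=\lim_{\mu\to\infty}\|Q_\mu\|_{L^2}=\|Q\|_{L^2}\in(0,\infty)$. When $m=0$ this is simply the scaling relation $Q_\mu(x)=\mu^{3/2}Q(\mu x)$, $R_\mu\equiv Q$, and nothing further is needed.

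I would then fix $\delta:=\pi/T$ and, for $N\in\N$ large, take $\mu_1:=N$, $\mu_2:=N+\delta$ and $\phi_j^N:=Q_{\mu_j}$ for $j=1,2$. Writing $Q_{\mu_j}=\mu_j^{3/2}R_{\mu_j}(\mu_j\,\cdot)$, substituting $y=\mu_1 x$, and combining the $L^2$-unitarity of dilations, the convergence $R_{\mu_j}\to Q$, and the strong $L^2$-continuity of $\lambda\mapsto\lambda^{3/2}Q(\lambda\,\cdot)$ at $\lambda=1$ (note $\mu_2/\mu_1\to 1$), one obtains
\[
\|\phi_1^N-\phi_2^N\|_{L^2}\longrightarrow 0,\qquad \|\phi_j^N\|_{L^2}\longrightarrow c_0\qquad(N\to\infty),
\]
so all of the $\phi_j^N$ lie in the fixed ball $B:=\{v\in L^2_{rad}(\R^3):\|v\|_{L^2}\le 2c_0\}$ once $N$ is large. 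Let $\Phi$ denote the hypothetical flow map. Since $e^{i\mu_j t}Q_{\mu_j}$ is a smooth global solution of constant $H^s$-norm, Theorem~\ref{thm:lwp} together with the uniqueness it provides forces $\Phi(\phi_j^N)(t)=e^{i\mu_j t}Q_{\mu_j}$ on $[0,T]$. At $t=T$ we have $(\mu_2-\mu_1)T=\delta T=\pi$, hence $\Phi(\phi_1^N)(T)-\Phi(\phi_2^N)(T)=e^{i\mu_1 T}(Q_{\mu_1}+Q_{\mu_2})$, and therefore
\begin{multline*}
\sup_{t\in[0,T]}\|\Phi(\phi_1^N)(t)-\Phi(\phi_2^N)(t)\|_{L^2}\ \ge\ \|Q_{\mu_1}+Q_{\mu_2}\|_{L^2}\\
\ge\ 2\|Q_{\mu_2}\|_{L^2}-\|\phi_1^N-\phi_2^N\|_{L^2}\ \longrightarrow\ 2c_0\ >\ 0 .
\end{multline*}
Thus $\|\phi_1^N-\phi_2^N\|_{L^2}\to 0$ while the corresponding solutions stay separated at time $T$, so $\Phi|_B$ is not uniformly continuous, which proves the theorem.

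The main obstacle will be the high-frequency behaviour of the solitary waves in the massive case $m>0$: namely the two-sided bound $0<\inf_\mu\|Q_\mu\|_{L^2}\le\sup_\mu\|Q_\mu\|_{L^2}<\infty$ over large $\mu$, together with the convergence $R_\mu\to Q$ of the rescaled profiles as $\mu\to\infty$. I would extract these from the variational characterisation of $Q_\mu$ (minimisation of the energy at fixed $L^2$-mass, or a Mountain Pass scheme) together with a concentration-compactness argument for the ground state equation with the vanishing mass parameter $m/\mu$, whose $\mu\to\infty$ limit is precisely the massless problem. For $m=0$ this step disappears, being governed entirely by the exact scaling symmetry; the remaining ingredients---the $L^2$-closeness of the data, the phase separation at $t=T$, and the identification $\Phi(\phi_j^N)=u_{\mu_j}$---are all soft.
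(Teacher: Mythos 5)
Your overall scheme---solitary waves $e^{it\mu}Q_\mu$, rescaled profiles $R_\mu(x)=\mu^{-3/2}Q_\mu(\mu^{-1}x)$, and frequencies chosen so that the data coalesce in $L^2$ while the solutions dephase by time $T$---is exactly the paper's (Propositions \ref{pro:uniform} and \ref{pro:uniform2}); your choice $\mu_2-\mu_1=\pi/T$ with $\mu_1=N\to\infty$ versus the paper's $\mu(n)=\frac{\pi}{2t}n^2$ is immaterial, the $m=0$ case is complete as you present it, and invoking the uniqueness from Theorem \ref{thm:lwp} to identify the flow with the solitary wave is fine. The genuine gap is in the case $m>0$: what you label the ``main obstacle'' is in fact the entire analytic content of Subsection \ref{subsect:ill-unif} for positive mass, and the route you sketch for it does not work as stated. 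First, minimisation of the energy at fixed $L^2$-mass yields $\mu$ only as an uncontrolled Lagrange multiplier (as in \cite{LY87}), so it cannot furnish solitary waves at the \emph{prescribed} frequencies $\mu=N$ and $\mu=N+\pi/T$ that your dephasing argument uses; this is precisely why the paper instead minimizes $F_\mu(u)=\int\overline{u}\sqrt{-\Delta+\mu^{-2}}\,u\,dx+\int|u|^2dx$ subject to $V(u)=1$, which produces a radial positive solution of \eqref{eq:Rmu} for every given $\mu$ (Lemma \ref{lem:Tmu}). Second, the uniform lower bound $\inf_\mu\|Q_\mu\|_{L^2}>0$ is not a routine by-product of the variational characterisation; the paper extracts it from Newton's theorem and Hardy's inequality $|x|^{-1}\leq\frac{\pi}{2}\sqrt{-\Delta}$ (Lemma \ref{lem:Rmu_bounds}). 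Third, and most importantly, a generic ``concentration-compactness argument'' does not deliver what you actually use, namely \emph{strong $L^2$} convergence of the rescaled profiles in the vanishing-mass limit: radial compactness gives $L^p$ convergence only for $2<p<3$, and ruling out escape of $L^2$-mass to spatial infinity is exactly the delicate point for this $L^2$-critical problem. The paper achieves it through a $\mu$-uniform pointwise decay bound $R_\mu(x)\lesssim|x|^{-4}$ for $|x|\gtrsim 1$, proved via positivity, Newton's theorem and a uniform estimate on the kernel of $(-\Delta+\mu^{-2})^{-1/2}$ (Lemma \ref{lem:Rmu_CV}); some substitute for this step is indispensable in your write-up.

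Two further corrections of detail. Your claim that $R_\mu\to Q$, the massless ground state, along the full family is stronger than what is available---it would essentially require uniqueness of the massless ground state---and stronger than what you need; the paper only proves subsequential $L^2$ convergence to \emph{some} positive radial solution $R\not\equiv 0$ of the massless equation (with $\|R\|_{L^2}^2\geq\|Q\|_{L^2}^2$), and that suffices. But once you only have subsequential convergence, you must extract the subsequence of $N$'s so that \emph{both} members of each pair $(\mu_1(N),\mu_2(N))$ converge to the \emph{same} limit; otherwise the crucial assertion $\|\phi_1^N-\phi_2^N\|_{L^2}\to 0$ is not justified. State and arrange this explicitly when you replace the full-sequence claim by the subsequential one.
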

We refer the reader to Propositions \ref{pro:uniform} and \ref{pro:uniform2} for more precise statements. The proof of Theorem \ref{thm:ill-unif} utilizes the fact that \eqref{eq:boson-star} has solitary wave solutions $u(t,x) = e^{it \mu} Q_\mu(x)$, where $Q_\mu \in H^{1/2}(\R^3)$ solves the nonlinear ellipitic equation
$$
\sqrt{-\Delta + m^2} \, Q_\mu + \mu Q_\mu - ( |x|^{-1} \ast |Q_\mu|^2) Q_\mu = 0,
$$
with $\mu > 0$ given. In the case $m=0$, the exact scaling properties simplify the analysis considerably and we can adapt an argument in \cite{bkpsv96} in order to prove Theorem \ref{thm:ill-unif}. In contrast to this, the case $m>0$ breaking exact scaling deserves an additional discussion of the elliptic problem for $Q_\mu$.

When studying \eqref{eq:boson-star}, it might be convenient to add the linear term $mu$ by considering the function
$e^{-itm} u$ instead of $u$, which turns the equation \eqref{eq:boson-star} into
\begin{equation} \label{eq:boson-star2}
i \partial_t u = ( \sqrt{-\Delta + m^2} - m ) u - (|x|^{-1} \ast |u|^2 ) u  .
\end{equation}
Furthermore, by rescaling $u$, it suffices to consider either $m=0$ or $m=1$ in the proofs.

The paper is organized as follows: In Section \ref{sect:wp}, we prove Theorem \ref{thm:lwp} concerning local well-posedness. In Section \ref{sect:ill}, we prove Theorems \ref{thm:ill-c3} and Theorem \ref{thm:ill-unif} on ill-posedness. 
\section{Well-posedness}\label{sect:wp}
\noindent
Let us fix some notation.
We denote the Fourier transform of a tempered distribution $f$ both
by $\mathcal{F}f$ and $\widehat{f}$, and to indicate a partial Fourier
transform with respect to time and space variables we also write $\mathcal{F}_t f$
and $\mathcal{F}_x f$, respectively.

We denote dyadic numbers $\lambda=2^l: l \in \Z$ by greek letters.
Further, for $\lambda>1$ we define dyadic annuli
\[\Delta_\lambda:=\{\xi \in \R^3: \lambda/2 <|\xi|\leq 2\lambda\}, \,
\text{ and }\Delta_1:=\{\xi \in \R^3: |\xi|\leq 1\}.\]
We fix an even function $\beta_1 \in C_0^\infty((-2,2))$ s.t. $\beta_1(s)=1$ if $|s|\leq 1$, and define $\beta_\lambda(s)=\beta(s\lambda^{-1})-\beta(2s\lambda^{-1})$ for $\lambda> 1$. Next, we define the (smooth) Fourier localization operators in the standard way:
\[P_{\lambda}f:=f_\lambda:=\mathcal{F}^{-1} (\beta_\lambda(|\cdot|) \mathcal {F}f),\]
and $P_{\leq \lambda}=\sum_{\mu=1}^\lambda P_\mu$.
For measurable sets $S\subset \R^n$, let $\chi_S$ denote the sharp cutoff function, i.e. define $\chi_S(x)=1$ if $x\in S$ and zero otherwise. We set
\[P_{S}f=\mathcal{F}^{-1} (\chi_S \mathcal {F}f).\]

For fixed $m\geq 0$ let $U(t)$ be the linear propagator defined by
\[\mathcal{F}_x(U(t)\phi)(\xi)=e^{it \varphi_m(\xi)} \mathcal{F}_x\phi(\xi), \qquad \varphi_m(\xi)=m-\sqrt{m^2+ |\xi|^2}.\]

\begin{dfn}\label{dfn:xsb}
Let $s,b\in \R$. We define the space $X^{s,b}$ of tempered distributions $u \in \mathcal{S}(\R^{3+1})$ such that
\begin{equation}\label{eq:xsb}
\|u\|_{s,b}:=\left(\int_{\R^{3+1}}\lb \xi\rb^{2s} \lb\tau+|\xi| \rb^{2b} |\mathcal{F}u(\xi,\tau)|^2d\xi d\tau\right)^{\frac12}<\infty.
\end{equation}
Further, $X^{s,b}_{rad}$ denotes the closed subspace of spatially radial distributions.
\end{dfn}

Concerning well-posedness, it suffices to consider the case $m=0$ as long as we are considering short time scales only.
Indeed, $\lb\tau+|\xi| \rb \sim \lb\tau+\sqrt{m+|\xi|^2}-m\rb $ and the corresponding $\|\cdot\|_{s,b}$-norms are equivalent.

We first recall Strichartz type estimates for the wave equation. They have been studied systematically in \cite{S08}, see also references therein.
\begin{lem}\label{lem:str}
Let $b >\frac12$. \begin{enumerate}
\item For any ball $B_\mu$ with radius $\mu \geq 0$ and arbitrary
  center, and for all $u \in X^{\frac14,b}$ it holds
  \begin{equation}\label{eq:str1}
    \|P_{B_\mu} u
    \|_{L^4(\R \times \R^3)}\ls{} \mu^{\frac14}\|u\|_{\frac14,b}.
  \end{equation}
\item For all $u \in X^{\frac12,b}$ it holds
\begin{equation}\label{eq:str1b}
    \|u
    \|_{L^4(\R \times \R^3)}\ls{}\|u\|_{\frac12,b}.
  \end{equation}
\item For all $\mu \geq 0$ and for all $u_1,u_2 \in X^{\frac14,b}$ it holds
  \begin{equation}\label{eq:str2}
    \|P_{\mu} (\widetilde{u}_1 \widetilde{u}_2)
    \|_{L^2(\R \times \R^3)}\ls{} \mu^{\frac12}\|u_1\|_{\frac14,b}\|u_2\|_{\frac14,b},
  \end{equation}
\end{enumerate}
where $\widetilde{u}_j$ denotes either $u_j$ or $\overline{u}_j$.
\end{lem}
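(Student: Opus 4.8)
The plan is to deduce all three estimates from the classical $L^4$ Strichartz estimate for the free wave equation in $\R^3$, namely $\| \, |D|^{\frac12} f \|_{L^2} \gs \|U(t)f\|_{L^4(\R\times\R^3)}$ with $|D|:=\sqrt{-\Delta}$, together with the transference principle: for $b>\frac12$ and any norm $\|\cdot\|_Y$ on functions of $(t,x)$ that is invariant under time translations, a linear bound $\|U(t)f\|_Y\ls\|\lb D\rb^\sigma f\|_{L^2}$ upgrades to $\|u\|_Y\ls\|u\|_{\sigma,b}$ (here $m=0$, so $U(t)=e^{-it\sqrt{-\Delta}}$ and $\lb\tau+|\xi|\rb$ is the relevant modulation weight). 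Since $|D|^{\frac12}\le\lb D\rb^{\frac12}$, part (ii) is immediate. For part (i) one also needs a cap-localized version of the cone Strichartz estimate, and part (iii) will be derived from (i). In all cases I may assume $\widetilde u_j=u_j$: the case of a complex conjugate is recovered verbatim after the reflection $(\tau,\xi)\mapsto(-\tau,-\xi)$, which is an isometry on the relevant spaces exchanging $U(t)$ with $\overline{U(t)}$ and the weight $\lb\tau+|\xi|\rb$ with $\lb\tau-|\xi|\rb$.

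For part (i) I would distinguish two regimes for $B_\mu=B_\mu(\xi_0)$. If $|\xi_0|\le 2\mu$, then $B_\mu\subset B_{4\mu}(0)$ and $\lb\xi\rb\ls\lb\mu\rb$ there; for $\mu\ge 1$ part (ii) gives $\|P_{B_\mu}u\|_{L^4}\ls\|P_{B_\mu}u\|_{\frac12,b}\ls\mu^{\frac14}\|P_{B_\mu}u\|_{\frac14,b}\le\mu^{\frac14}\|u\|_{\frac14,b}$, while for $\mu\le 1$ the rescaling $\xi=\mu\eta$, $(t,x)\mapsto(\mu t,\mu x)$, reduces the free estimate for data Fourier-supported in $B_{4\mu}(0)$ to the case of a fixed ball, yielding the stronger bound $\ls\mu^{\frac12}\|f\|_{L^2}\le\mu^{\frac14}\|f\|_{L^2}$, and transference concludes. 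It then remains to treat the transverse regime $R:=|\xi_0|\gg\mu$, where $\lb\xi\rb\sim\lb R\rb$ on $B_\mu$; after rescaling $\xi=R\eta$ (under which $U(t)$ is conjugated to itself by the space-time dilation $(t,x)\mapsto(Rt,Rx)$), the needed free estimate reduces to the cap estimate $\|U(t)h\|_{L^4(\R\times\R^3)}\ls\delta^{\frac14}\|h\|_{L^2}$ with $\delta:=\mu/R\le 1$, for $h$ whose Fourier transform is supported in a ball of radius $\delta$ centered at distance $\sim 1$ from the origin. I would prove this cap estimate by tilting the space-time coordinates so as to flatten the unique null (radial) direction of the cone over the cap; modulo a lower-order curvature perturbation the cone then becomes the product of a line with a two-dimensional paraboloid over a $\delta$-box, so Bernstein's inequality in the flat direction produces the gain $\delta^{\frac14}$ and the lossless $L^2$-critical $L^4_{t,x}$ Strichartz estimate for the $2$-dimensional Schr\"odinger equation controls the two curved directions. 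Transferring this back yields \eqref{eq:str1}. (Alternatively, \eqref{eq:str1} and \eqref{eq:str1b} may be quoted directly from \cite{S08}.)

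For part (iii) I would decompose $u_1,u_2$ dyadically into $P_{\lambda_1}u_1$ and $P_{\lambda_2}u_2$. For $P_\mu(u_1u_2)$ to be nonzero one must have either (a) $\lambda_1,\lambda_2\ls\mu$, or (b) $\lambda_1\sim\lambda_2=:\lambda\gg\mu$. In case (a), writing $P_{\ls\mu}=P_{B_{C\mu}(0)}$, H\"older's inequality and two applications of \eqref{eq:str1} give $\|P_\mu((P_{\ls\mu}u_1)(P_{\ls\mu}u_2))\|_{L^2}\le\|P_{\ls\mu}u_1\|_{L^4}\|P_{\ls\mu}u_2\|_{L^4}\ls\mu^{\frac12}\|u_1\|_{\frac14,b}\|u_2\|_{\frac14,b}$. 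In case (b) I would cover $\{|\xi|\sim\lambda\}$ by finitely overlapping balls $\{B_\mu(c)\}_{c\in\mathcal C}$ with $\mathcal C$ maximal $\mu$-separated; since $\xi_1\in B_\mu(c)$ and $|\xi_1+\xi_2|\ls\mu$ force $\xi_2\in B_{3\mu}(-c)$, the contribution $P_\mu((P_\lambda u_1)(P_{\sim\lambda}u_2))$ of this dyadic block equals $\sum_{c\in\mathcal C}P_\mu((P_{B_\mu(c)}u_1)(P_{B_{3\mu}(-c)}u_2))$. H\"older's inequality together with \eqref{eq:str1} (using $\lb\xi\rb\sim\lambda$ on these balls) bounds each summand by $\ls\mu^{\frac12}\lambda^{\frac12}\|P_{B_\mu(c)}u_1\|_{0,b}\|P_{B_{3\mu}(-c)}u_2\|_{0,b}$; summing in $c$ by the Cauchy--Schwarz inequality, exploiting the bounded overlap of the balls, and then using $\|P_\lambda v\|_{0,b}\sim\lambda^{-\frac14}\|P_\lambda v\|_{\frac14,b}$, gives $\ls\mu^{\frac12}\|P_\lambda u_1\|_{\frac14,b}\|P_{\sim\lambda}u_2\|_{\frac14,b}$ for this block. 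A final Cauchy--Schwarz summation over the dyadic parameter $\lambda$ then yields \eqref{eq:str2}.

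The step I expect to be the main obstacle is the cap-localized cone Strichartz estimate entering part (i), the one genuinely harmonic-analytic ingredient; the remainder consists only of the transference principle together with almost-orthogonality and H\"older's inequality. In particular, parts (i) and (ii) may also be invoked directly from \cite{S08}.
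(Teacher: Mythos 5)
Your proposal is correct, and for part (iii) it is essentially the paper's argument: the paper tiles $\R^3$ by disjoint cubes $C_z=\mu z+[0,\mu)^3$, notes that only near-diagonal pairs interact under $P_\mu$, and applies H\"older plus part (i) (with balls $B_{\sqrt 3\mu}(\mu z)$) and Cauchy--Schwarz with bounded overlap --- exactly your case (b), except that the cube tiling makes your preliminary dyadic decomposition and the case (a)/(b) split unnecessary, and it turns your ``equals'' for the overlapping-ball cover into a genuine identity (with a maximal $\mu$-separated net you should pass to a disjoint partition, e.g.\ cubes or Voronoi cells, for the first factor; this is a cosmetic fix). The main divergence is in parts (i)--(ii): the paper simply quotes the localized estimate from \cite[Theorem 4.1]{S08}, upgrades it to $X^{s,b}$ via the transference lemma \cite[Lemma 2.3]{GTV97}, and sums with Littlewood--Paley, then deduces (ii) from (i) with $\mu\sim\lambda$; you instead get (ii) directly from the classical $\dot H^{1/2}\to L^4_{t,x}$ wave Strichartz estimate and transference (no circularity, since your proof of (i) only uses (ii)), and you supply a self-contained proof of the cap-localized bound $\|U(t)h\|_{L^4}\ls\delta^{1/4}\|h\|_{L^2}$ by shearing out the null direction, applying Bernstein in the flat variable and the $L^2$-critical $L^4_{t,x}$ Schr\"odinger estimate in the two curved variables (with Minkowski's inequality to commute the $L^2_{\xi_1}$ and $L^4_{t,x'}$ norms, and uniform nondegeneracy of the Hessian of $\sqrt{\xi_1^2+|\xi'|^2}-\xi_1$ for $\xi_1\sim1$); together with your parabolic rescaling to center distance $R$ this reproduces the $\mu^{1/4}\lambda^{1/4}$ bound of \cite{S08}, so your route buys self-containedness at the cost of redoing the harmonic analysis the paper outsources. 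One small caveat: your blanket reduction to $\widetilde u_j=u_j$ via the reflection $(\tau,\xi)\mapsto(-\tau,-\xi)$ swaps the weight $\lb\tau+|\xi|\rb$ for $\lb\tau-|\xi|\rb$, so it is cleaner to argue as the paper implicitly does, namely that the $L^4$ norm is conjugation invariant and (i) allows arbitrary centers, whence $\|P_{B}\overline{u}\|_{L^4}=\|P_{-B}u\|_{L^4}\ls\mu^{\frac14}\|u\|_{\frac14,b}$; with that phrasing the conjugated cases of (iii) go through verbatim.
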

\begin{proof}
{\it Part i)} Due to \cite[Theorem 4.1]{S08} and the extension lemma \cite[Lemma 2.3]{GTV97}  we have
\begin{equation*}
    \|P_{B_\mu} P_\lambda u
    \|_{L^4(\R \times \R^3)}\ls{} \mu^{\frac14}\lambda^{\frac14}\|P_\lambda u\|_{0,b}.
  \end{equation*}
By Littlewood-Paley theory it follows that
\begin{align*}
\|P_{B_\mu} u\|_{L^4}\ls{} &\Big(\sum_{\lambda\geq 1}\| P_\lambda P_{B_\mu}u
    \|^2_{L^4(\R \times \R^3)}\Big)^{\frac12}\\\ls{}&\mu^{\frac14} \Big(\sum_{\lambda\geq 1}  \lambda ^{\frac12}\|P_\lambda u\|^2_{0,b}\Big)^{\frac12}\ls \mu^{\frac14}\|u\|_{\frac14,b}.
\end{align*}
{\it Part ii)} The Littlewood-Paley estimate and Part i) with $\mu\sim\lambda$ imply
\[
\|u\|_{L^4}\ls \Big(\sum_{\lambda\geq 1}\| P_\lambda u
    \|^2_{L^4(\R \times \R^3)}\Big)^{\frac12}\ls \Big(\sum_{\lambda\geq 1}\| P_\lambda u
    \|^2_{0,b}\Big)^{\frac12}\ls \|u\|_{0,b}.
\]
{\it Part iii)} We use almost orthogonality: Let us consider the collection of cubes $C_z=\mu z+[0,\mu)^3, z \in \Z^3$, which induce a disjoint covering of $\R^3$. We have
\begin{align*}
\|P_{\mu} (\widetilde{u}_1 \widetilde{u}_2)
    \|_{L^2(\R \times \R^3)}\ls{} \sum_{z,z'\in \Z^3}\|P_\mu( P_{C_z} \widetilde{u}_1 P_{C_{z'}}\widetilde{u}_2)\|_{L^2(\R \times \R^3)}
  \end{align*}
For each $z\in \Z^3$, only those $z'\in \Z^3$ with $|z-z'|\ls 1$ yield
a nontrivial contribution to the sum. Hence, by Cauchy-Schwarz and Part i) we obtain
\begin{align*}
&\|P_{\mu} (\widetilde{u}_1 \widetilde{u}_2)
    \|_{L^2(\R \times \R^3)}\\\ls{}&\mu^{\frac12}\Big(\sum_{z\in \Z^3}\|P_{B_{\sqrt{3}\mu} (\mu z)} u_1\|^2_{\frac14,b}\Big)^{\frac12}\Big(\sum_{z'\in \Z^3}\|P_{B_{\sqrt{3}\mu} (\mu z')} u_2\|^2_{\frac14,b}\Big)^{\frac12},
  \end{align*}
where $B_{\sqrt{3}\mu}(\mu z)$ denotes the ball
with radius $\sqrt{3}\mu$ and center $\mu z$, and the claim follows from $\sum_{z \in \Z^3}\chi^2_{B_{\sqrt{3}\mu} (\mu z)}\ls 1$.
\end{proof}
\begin{rem}
By (complex) interpolation of $\|u\|_{L^4_tL^4_x}\ls \|u\|_{\frac12,b}$ and $\|u\|_{L^2_tL^2_x}=\|u\|_{0,0}$ resp. $\|u\|_{L^\infty_t L^2_x}\ls\|u\|_{0,b}$ we obtain
\begin{align}
\label{eq:inter1}\|u\|_{L^{\frac83}_tL^{\frac83}_x}\ls &\|u\|_{\frac14,\frac{b}{2}}\\
\label{eq:inter2}\|u\|_{L^{8}_t L^{\frac83}_x}\ls &\|u\|_{\frac14,b}
\end{align}
if $b>\frac12$. The Sobolev embedding implies
\[
\|u\|_{L^4_tL^4_x}\ls \|D^{\frac34}u\|_{L^4_tL^2_x}\ls \|u\|_{\frac34,\frac14}.
\]
By interpolation with $\|u\|_{L^4_tL^4_x}\ls \|u\|_{\frac12,b}$ (for $b>\frac12$) we obtain the following: For any $\delta>0$ there exists $b<\frac12$ such that
\begin{equation}\label{eq:inter3}
\|u\|_{L^4_tL^4_x}\ls \|u\|_{\frac12+\delta,b}.
\end{equation}
Furthermore, there is one obvious consequence from \eqref{eq:str2} which we will use later: For $s>\frac14$, $b>\frac12$,
\begin{equation}\label{eq:str2-sub}
    \|P_{\mu} (\widetilde{u}_1 \widetilde{u}_2)
    \|_{L^2(\R \times \R^3)}\ls{} \mu^{\frac34-s}\|u_1\|_{s,b}\|u_2\|_{s,b},
  \end{equation}
\end{rem}

In the case of radial data there is the following improvement, which is an immediate consequence of
\cite[Theorem 2.6]{S08}.
\begin{lem}\label{lem:bil-str} Let $b >\frac12$. For all $\lambda\geq \mu \geq 0$, and for all $u_1,u_2 \in X^{0,b}_{rad}$
\begin{equation}\label{eq:bil-str}
\|P_{\mu} (P_{\lambda} \widetilde{u}_1 P_{\lambda} \widetilde{u}_2)
    \|_{L^2(\R \times \R^3)}\ls{} \mu \|u_1\|_{0,b}\|u_2\|_{0,b},
  \end{equation}
where $\widetilde{u}_j$ denotes either $u_j$ or $\overline{u}_j$.
\end{lem}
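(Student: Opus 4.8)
The plan is to deduce \eqref{eq:bil-str} from the radial Strichartz theory of \cite[Theorem 2.6]{S08} by means of the standard $X^{s,b}$ transfer principle. As usual it suffices to take $m=0$, so that $U(t)=e^{-it\sqrt{-\Delta}}$. Writing $g_j(\tau,\xi)=\langle\tau+|\xi|\rangle^{b}\widehat{P_\lambda u_j}(\tau,\xi)$ and substituting $\tau=\sigma-|\xi|$ exhibits $P_\lambda u_j$ as a superposition $c\int_\R e^{it\sigma}\langle\sigma\rangle^{-b}U(t)\psi_{j,\sigma}\,d\sigma$ of free waves, where $\widehat{\psi_{j,\sigma}}(\xi)=g_j(\sigma-|\xi|,\xi)$ is radial and spectrally supported in $\Delta_\lambda$, and $\int_\R\|\psi_{j,\sigma}\|_{L^2}^2\,d\sigma\ls\|u_j\|_{0,b}^2$. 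Expanding the product, applying $P_\mu$, using Minkowski's inequality in $(\sigma_1,\sigma_2)$, and finally Cauchy--Schwarz (which converges because $b>\tfrac12$), the claim reduces to the free-wave bilinear estimate
\begin{equation}\label{eq:bil-free-plan}
\big\|P_\mu\big((U(t)\psi_1)(U(t)\psi_2)\big)\big\|_{L^2(\R\times\R^3)}\ls \mu\,\|\psi_1\|_{L^2}\|\psi_2\|_{L^2}
\end{equation}
for radial $\psi_1,\psi_2$ with $\widehat{\psi_j}$ supported in $\Delta_\lambda$. The variants with $\overline{u}_j$ are handled identically, since $\overline{U(t)\psi}$ is a free wave for the propagator $e^{+it\sqrt{-\Delta}}$ with data of the same $L^2$-size, support and symmetry, for which the same estimates hold after $t\mapsto-t$.

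For \eqref{eq:bil-free-plan} the essential input is the radial Strichartz estimate $\|U(t)P_\lambda f\|_{L^4_tL^{12/5}_x}\ls\|f\|_{L^2}$ valid for radial $f$: the pair $(4,\tfrac{12}{5})$ is scaling-critical for $L^2$-data (indeed $\tfrac14+\tfrac{3\cdot 5}{12}=\tfrac32$), and, although it lies strictly below the non-radial wave-admissibility line, it is covered by \cite[Theorem 2.6]{S08}. Granting this, set $w_j=U(t)\psi_j$. Since $P_\mu(w_1w_2)$ has spatial frequencies $\lesssim\mu$, Bernstein's inequality and Hölder's inequality give, at each fixed time,
\[\|P_\mu(w_1w_2)(t)\|_{L^2_x}\ls\mu^{3(\frac56-\frac12)}\|(w_1w_2)(t)\|_{L^{6/5}_x}\ls \mu\,\|w_1(t)\|_{L^{12/5}_x}\|w_2(t)\|_{L^{12/5}_x},\]
and Cauchy--Schwarz in $t$ followed by the radial Strichartz estimate above yields \eqref{eq:bil-free-plan}. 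The point of the exponent bookkeeping is that $L^{12/5}_x$ is precisely the Lebesgue exponent for which the two powers of $\lambda$ cancel, leaving the factor $\mu$.

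I expect the only genuine obstacle to be this last identification. The naive route through $\|w_j\|_{L^4_{t,x}}\ls\lambda^{1/2}\|\psi_j\|_{L^2}$ (i.e.\ \eqref{eq:str1b}) wastes a full power of $\lambda$; one must instead trade the high$\times$high$\to$low output gain coming from $P_\mu$ against a radial Strichartz estimate genuinely off the classical admissibility line, and then check that the resulting pair still falls within the range of \cite[Theorem 2.6]{S08} — which it does, since $q=4>2$ and $2<\tfrac{12}{5}<\infty$. The transfer principle in the first step is entirely routine given $b>\tfrac12$, and radiality is preserved at every stage.
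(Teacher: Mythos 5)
The transference part of your plan is routine and unobjectionable, but the linear estimate on which everything then rests, namely $\|U(t)P_\lambda f\|_{L^4_t L^{12/5}_x}\ls \|f\|_{L^2}$ for radial $f$, is false, and it is not what \cite[Theorem 2.6]{S08} says. Since $(4,\tfrac{12}{5})$ is $L^2$-scaling-critical, the frequency localization scales away and one may test at $\lambda=1$: for radial $\widehat f=\chi_{\Delta_1}$ the free wave concentrates for large $t$ on $\{\,\bigl||x|-t\bigr|\ls 1\}$ with amplitude $\gs t^{-1}$, so $\|U(t)f\|_{L^{12/5}_x}\gs t^{-1}\cdot (t^{2})^{5/12}=t^{-1/6}$ and $\|U(t)f\|_{L^4_t L^{12/5}_x}^4\gs\int_1^\infty t^{-2/3}\,dt=\infty$. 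Equivalently, the radial improvement of the admissible range in $d=3$ only reaches pairs with $\tfrac1q+\tfrac2r<1$, whereas $\tfrac14+\tfrac56=\tfrac{13}{12}>1$: radial symmetry does not rescue a scaling-critical pair this far below the admissibility line, and \cite[Theorem 2.6]{S08} is a \emph{bilinear} estimate with an output localization parameter, not a linear Strichartz estimate at a non-admissible pair. The gap is structural rather than cosmetic: after Bernstein has produced the single factor $\mu$, your fixed-time H\"older step forgets the high$\times$high$\to$low interaction entirely, so the remaining linear bound would have to be uniform in $\lambda$ at $s=0$, which is exactly what fails. The full gain of $\mu$ in \eqref{eq:bil-str} (as opposed to $\mu^{\frac12}\lambda^{\frac12}$ in \eqref{eq:str2}) is an intrinsically bilinear radial phenomenon and cannot be recovered from pointwise-in-time H\"older plus any true linear estimate.

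For comparison, the paper's proof keeps the bilinear structure: assuming $\lambda\gg\mu$, it splits $u_1,u_2$ into low and high modulation at threshold $\mu$, applies the bilinear radial estimate of \cite[Theorem 2.6]{S08} with localization $r\sim\mu$ to the low$\times$low piece (whose product is Fourier-supported where $|\tau+|\xi||\ls\mu$), and disposes of every term containing a high-modulation factor by the crude chain $\|P_\mu(\cdot)\|_{L^2_{t,x}}\ls\mu^{3/2}\|\cdot\|_{L^2_tL^1_x}$, H\"older $L^\infty_tL^2_x\times L^2_tL^2_x$, and the modulation gain $\mu^{-b}$, yielding $\mu^{3/2-b}\ls\mu$ for $b>\tfrac12$. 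If you wish to keep your free-wave/transference framing, you would still need this bilinear input (or an equivalent) for the low-modulation part; replacing it by a linear Strichartz estimate at $(4,\tfrac{12}{5})$ is not a viable route.
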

\begin{proof} It suffices to consider $\lambda\gg \mu$.
Decompose $u_j=u_{j,low}+u_{j,high}$ into low and high modulation,
i.e.
\begin{align*}
\supp
\mathcal{F}u_{j,low}\subseteq {}&\{(\tau,\xi): |\tau+|\xi||\leq
\mu\}\\
\supp
\mathcal{F}u_{j,high}\subseteq {}&\{(\tau,\xi): |\tau+|\xi||>
\mu\}
\end{align*} Because of \[\supp \mathcal{F}(P_{\mu}
(P_{\lambda}u_{1,low}P_{\lambda}u_{2,low}))\subseteq \{(\tau,\xi):
|\tau+|\xi||\ls \mu\}\] the estimate of \cite[Theorem 2.6]{S08} with
$r\sim \mu$ readily implies
\[
\|P_{\mu} (P_{\lambda} \widetilde{u}_{1,low} P_{\lambda} \widetilde{u}_{2,low})
    \|_{L^2(\R \times \R^3)}\ls{} \mu \|u_1\|_{0,b}\|u_2\|_{0,b}.
\]
On the other hand,
\begin{align*}
\|P_{\mu} (P_{\lambda} \widetilde{u}_{1,low} P_{\lambda} \widetilde{u}_{2,high})
 \|_{L^2(\R \times \R^3)}
\ls{}& 
\mu^{\frac32} \|P_{\lambda} \widetilde{u}_{1,low} P_{\lambda} \widetilde{u}_{2,high}
 \|_{L^2_t L^1_x}\\
\ls{}&
\mu^{\frac32} \|P_{\lambda} \widetilde{u}_{1,low}\|_{L^\infty_t L^2_x}\| P_{\lambda} \widetilde{u}_{2,high}
 \|_{L^2_t L^2_x}\\
\ls{}& \mu^{\frac32-b} \|u_1\|_{0,b}\|u_2\|_{0,b}.
\end{align*}
The same argument applies in the remaining cases, since at least one
factor has high modulation.
\end{proof}
\begin{rem}
By the Sobolev embeddding, we obtain
\[
\|P_{\mu} (P_{\lambda} \widetilde{u}_1 P_{\lambda} \widetilde{u}_2)
    \|_{L^2(\R \times \R^3)}\ls{} \mu^{\frac32}\|P_{\mu} (P_{\lambda} \widetilde{u}_1 P_{\lambda} \widetilde{u}_2)
    \|_{L^2_tL^1_x}\ls \mu^{\frac32} \|u_1\|_{0,\frac14}\|u_2\|_{0,\frac14}.
\]
Interpolation with \eqref{eq:bil-str} implies that for any $\delta>0$
there exists $b<\frac12$ such that
\begin{equation}\label{eq:inter4}
\|P_{\mu} (P_{\lambda} \widetilde{u}_1 P_{\lambda} \widetilde{u}_2)
    \|_{L^2(\R \times \R^3)}\ls{} \mu^{1+\delta} \|u_1\|_{0,b}\|u_2\|_{0,b}
\end{equation}
for radial $u_1,u_2$.
\end{rem}

Next, we are prepared to prove
\begin{pro}\label{pro:bil-est1}
Let $s> \frac14$. There exists $-\frac12<b'<0<\frac12<b\leq b'+1$ and $\delta>0$, such that
\begin{equation}\label{eq:bil-est1}
\||x|^{-1} \ast(u_1\overline{u}_2) \, u_3\|_{s,b'}\ls T^{\delta} \|u_1\|_{s,b}\|u_2\|_{s,b}\|u_3\|_{s,b}
\end{equation}
for all $u_j \in X^{s,b}$ with $\supp(u_j)\subset \{(t,x): |t|\leq
T\}$, $j=1,2,3$.
\end{pro}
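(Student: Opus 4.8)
The plan is to deduce \eqref{eq:bil-est1} from the Strichartz and bilinear estimates of Lemma~\ref{lem:str}, handling the Coulomb nonlinearity through the Fourier symbol $\mathcal{F}(|x|^{-1})(\eta)=c|\eta|^{-2}$ of the kernel. First I would reduce to $m=0$, so that the relevant weight is $\langle\tau+|\xi|\rangle$; this is legitimate by the norm equivalence recorded just above Lemma~\ref{lem:str}. Then I would fix $b>\tfrac12$ close to $\tfrac12$ and $b'\in[\,b-1,\,0)\subset(-\tfrac12,0)$, which makes the constraints $-\tfrac12<b'<0<\tfrac12<b\le b'+1$ hold. Since $b'\le0$ gives $\langle\tau+|\xi|\rangle^{b'}\le1$, we have $\|F\|_{s,b'}\le\|F\|_{s,0}=\|\langle D_x\rangle^s F\|_{L^2_{t,x}}$, where $F:=(|x|^{-1}\ast(u_1\overline{u_2}))\,u_3$. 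Keeping $b'<0$ serves only to recover the factor $T^\delta$ at the end, via the standard time-localization estimate for $X^{s,b}$-norms applied to $F$, which is supported in $\{|t|\le T\}$ because it is built from time-supported functions and from operators acting only in the space variables.

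For the core estimate I would split the potential as $|x|^{-1}\ast(u_1\overline{u_2})=G_{lo}+\sum_{\nu>1}G_\nu$, with $\mathcal{F}(G_\nu)(\eta)=c|\eta|^{-2}\beta_\nu(|\eta|)\,\mathcal{F}(u_1\overline{u_2})(\eta)$ and $G_{lo}$ collecting the frequencies $|\eta|\le1$, combine this with the Littlewood--Paley decomposition $u_3=\sum_{\lambda_3}P_{\lambda_3}u_3$, and carry out the usual trichotomy $\lambda_3\gg\nu$, $\lambda_3\ll\nu$, $\lambda_3\sim\nu$ (with $G_{lo}$ always behaving like the case $\nu\ll\lambda_3$). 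In each regime $\langle D_x\rangle^s$ is placed on the factor carrying the output frequency; $\|P_\nu(u_1\overline{u_2})\|_{L^2_{t,x}}$ is controlled by the bilinear estimate \eqref{eq:str2}, \eqref{eq:str2-sub}; $u_3$ (or the relevant frequency truncation of it) is placed in $L^\infty_tL^2_x$ by $X^{s,b}\hookrightarrow C_tH^s_x$, in $L^\infty_{t,x}$ by Bernstein, or in $L^4_{t,x}$ by \eqref{eq:str1}; and $G_\nu$ is moved from $L^2_{t,x}$ into $L^\infty_x$ or $L^4_x$ by Bernstein, since $G_\nu$ is localized to spatial frequency $\simeq\nu$. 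The summations in $\lambda_3$ and $\nu$ are then handled by almost-orthogonality of the output frequencies when these lie in dyadic annuli (the regimes $\lambda_3\gg\nu$ and $\lambda_3\ll\nu$), and by the triangle inequality together with the convergent geometric series $\sum_{\nu>1}\nu^{-s}$ in the diagonal regime $\lambda_3\sim\nu$. The off-diagonal regimes each generate the series $\sum_{\nu>1}\nu^{1/4-s}$ (coming from $\|G_\nu\|_{L^2_tL^\infty_x}\ls\nu^{3/2}\nu^{-2}\|P_\nu(u_1\overline{u_2})\|_{L^2_{t,x}}\ls\nu^{1/4-s}\|u_1\|_{s,b}\|u_2\|_{s,b}$ via \eqref{eq:str2-sub}), and this is the one place where the hypothesis $s>\tfrac14$ enters — in accordance with the sharpness asserted in Theorem~\ref{thm:ill-c3}.

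The step I expect to be the main obstacle is the low-frequency piece $G_{lo}$: a dyadic decomposition of $\{|\eta|\le1\}$ will not sum, because $|D|^{-2}$ over-amplifies low frequencies and, by \eqref{eq:str2}, each dyadic piece ends up contributing a quantity of size $\simeq\|u_1\|_{s,b}\|u_2\|_{s,b}$, while there are infinitely many such scales. The fix is to estimate $G_{lo}$ as a single block in $L^2_tL^\infty_x$: by Fourier inversion and the $L^\infty_\eta$-bound on $\mathcal{F}_x(u_1\overline{u_2})$,
\[
\|G_{lo}(t)\|_{L^\infty_x}\le\|\mathcal{F}_x G_{lo}(t)\|_{L^1_\eta}\ls\|u_1(t)\overline{u_2(t)}\|_{L^1_x}\le\|u_1(t)\|_{L^2_x}\|u_2(t)\|_{L^2_x},
\]
where the middle inequality crucially uses that $|\eta|^{-2}$ is locally integrable in $\R^3$, namely $\int_{|\eta|\le1}|\eta|^{-2}\,d\eta<\infty$. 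Hölder in time together with the support hypothesis then yield $\|G_{lo}\|_{L^2_tL^\infty_x}\ls T^{1/2}\|u_1\|_{s,b}\|u_2\|_{s,b}$, after which $G_{lo}\,u_3$ is controlled exactly as in the regime $\lambda_3\gg\nu$. Combining this power $T^{1/2}$ with the power $T^{|b'|}$ produced by time-localizing the remaining (potential-frequency $\gtrsim1$) part of $F$ gives \eqref{eq:bil-est1} with $\delta=\min\{\tfrac12,|b'|\}>0$.
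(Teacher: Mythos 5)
Your proposal is correct in outline, but it takes a genuinely different route from the paper, so let me compare. The paper argues by duality: it reduces \eqref{eq:bil-est1} to the quadrilinear bound \eqref{eq:bil-dual} against a fourth function $u_4\in X^{0,-b'}$, splits $\lb D\rb^s$ onto either the potential or $u_3$, and then tunes fractional weights ($|D|^{-7/8}$ and $|D|^{-9/8}$ in $I_1$) so that one factor is handled by the bilinear estimate \eqref{eq:str2} and the other by Hardy--Littlewood--Sobolev together with interpolated Strichartz norms such as \eqref{eq:inter2}; the factor $T^\delta$ is gained by leaving slack in the modulation exponents (e.g.\ $\|u_4\|_{0,\frac18}$ versus $\|u_4\|_{0,-b'}$ with $-b'>\frac18$), and the block $P_{\leq 2}(u_1\overline{u}_2)$ is treated by HLS. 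You never dualize: you reduce $\|F\|_{s,b'}$ to $T^{|b'|}\,\|\lb D\rb^s F\|_{L^2_{t,x}}$ via the time support of $F$ and the standard localization lemma for exponents in $(-\frac12,\frac12)$, and then close a trilinear $L^2_{t,x}$ estimate by a Littlewood--Paley trichotomy in the potential frequency $\nu$ versus the frequency $\lambda_3$ of $u_3$, using \eqref{eq:str2}, \eqref{eq:str2-sub}, Bernstein, the embedding $X^{s,b}\subset L^\infty_t H^s$, and almost orthogonality of the outputs; your treatment of the low potential frequencies through $\int_{|\eta|\le 1}|\eta|^{-2}\,d\eta<\infty$ and an $L^1_\eta$ Fourier bound is a clean substitute for the paper's HLS step and is also where your explicit $T^{1/2}$ comes from. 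What your route buys is a more elementary and transparent proof (no dual function, no $L^8_tL^{8/3}_x$-type norms, a visible source of the $T^\delta$), at the price of carrying out the dyadic bookkeeping yourself; the paper's dual formulation has the advantage of running in parallel with the radial case of Proposition \ref{pro:bil-est2}. One point you should make explicit when writing out the cases: in the regime $\nu\gg\lambda_3$ the output carries the weight $\nu^s$, so the pairing you actually display ($G_\nu\in L^2_tL^\infty_x$ by Bernstein, $u_3\in L^\infty_tL^2_x$) produces the non-summable factor $\nu^{s}\cdot\nu^{\frac14-s}\cdot\lambda_3^{-s}=\nu^{\frac14}\lambda_3^{-s}$; you must instead keep $G_\nu$ in $L^2_{t,x}$, where \eqref{eq:str2-sub} gives $\|G_\nu\|_{L^2_{t,x}}\lesssim\nu^{-\frac54-s}\|u_1\|_{s,b}\|u_2\|_{s,b}$, and apply Bernstein to the low-frequency factor $P_{\lambda_3}u_3$ in $L^\infty_{t,x}$, which yields $\nu^{s}\cdot\nu^{-\frac54-s}\cdot\lambda_3^{\frac32-s}\lesssim\nu^{\frac14-s}$ for $\lambda_3\lesssim\nu$ (reducing, as the paper does, to $\frac14<s<\frac38$), i.e.\ exactly the summable exponent you claimed. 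The required tools are all in your list, but the placement matters; with that correction the argument closes for every $s>\frac14$.
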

\begin{proof}
Without loss we  may assume $\frac14< s<\frac38$.
We notice that in $\R^3$ convolution with $|x|^{-1}$ is (up to a multiplicative constant) the Fourier-multiplier $|D|^{-2}$ with symbol $|\xi|^{-2}$, which is locally integrable.
By duality, it suffices to prove
\begin{equation}\label{eq:bil-dual}
\begin{split}
I:=&\Big|\iint |D|^{-2}(u_1\overline{u}_2) u_3 \lb
D\rb^s\overline{u_4} dt dx \Big|\\
\ls & T^{\delta} \|u_1\|_{s,b}\|u_2\|_{s,b}\|u_3\|_{s,b}
\|u_4\|_{0,-b'}.
\end{split}
\end{equation}
It suffices to consider $u_j$ with non-negative Fourier-transform. Then, we may split the left hand side into two terms:
\begin{align*}
I\ls &\Big|\iint \lb D\rb^s|D|^{-2}(u_1\overline{u}_2) u_3
\overline{u_4} dt dx \Big| +\Big|\iint |D|^{-2}(u_1\overline{u}_2) \lb
D\rb^s u_3 \overline{u_4} dt dx \Big|\\
=:&I_1+I_2.
\end{align*}
Let us first discuss the contribution of $P_{\leq 2}( u_1\overline{u}_2)$:
For any $s_1,s_2\in \R$, the Hardy-Littlewood-Sobolev and Bernstein inequalities imply
\begin{align*}
&\Big|\iint \lb D\rb^{s_1}|D|^{-2}P_{\leq 2}(u_1\overline{u}_2) \lb D\rb^{s_2} u_3
\overline{u_4} dt dx\Big|\\
\ls & 
\||D|^{-2}P_{\leq 2} (u_1\overline{u_2}) \|_{L^2_tL^6_x}\|P_{\leq 2}(\lb D\rb^{s_2} u_3
\overline{u_4})\|_{L^2_tL^{\frac65}_x}\\
\ls & \|P_{\leq 2}(u_1\overline{u_2})\|_{L^2_tL^{\frac65}_x}\|\lb D\rb^{s_2}u_3\overline{u_4}\|_{L^2_tL^{1}_x}\\
\ls &\|u_1\|_{L^4_tL^2_x} \|u_2\|_{L^4_tL^2_x}\|\lb D\rb^{s_2}u_3\|_{L^4_tL^2_x} \|u_4\|_{L^4_tL^2_x}\\
\ls &\|u_1\|_{0,\frac14}\|u_2\|_{0,\frac14}\| u_3\|_{s_2,\frac14}\|u_4\|_{0,\frac14}.
\end{align*}
Thus, we may henceforth assume that $P_1(u_1\overline{u}_2)=P_1(u_3\overline{u}_4)=0$.
First, we consider the contribution of $I_1$:
\[
I_1\leq \|\lb D\rb^s|D|^{-\frac78}(u_1\overline{u}_2)\|_{L^2} \||D|^{-\frac98}(u_3\overline{u}_4)\|_{L^2}=:I_{11}\cdot I_{12}
\]
On the one hand, using \eqref{eq:str2} we obtain
\begin{align*}
I_{11} \ls & \sum_{\mu \geq 2}\mu^{s-\frac78}\|P_\mu(u_1\overline{u}_2)\|_{L^2}
\ls \sum_{\mu \geq 2}\mu^{s-\frac38}\|u_1\|_{\frac14,b}\|u_2\|_{\frac14,b}\\\ls &\|u_1\|_{\frac14,b}\|u_2\|_{\frac14,b}
\end{align*}
On the other hand, the Hardy-Littlewood-Sobolev inequality and \eqref{eq:inter2} yield
\begin{equation*}
I_{12} \ls  \|u_3\overline{u}_4\|_{L^2_tL^{\frac87}_x}\ls \|u_3\|_{L^{8}_tL^{\frac83}_x}\|u_4\|_{L^{\frac83}_tL^{2}_x}
\ls \|u_3\|_{\frac14,b}\|u_4\|_{0,\frac18}.
\end{equation*}
For $-b'> \frac18$, $b>\frac12$, this implies
\begin{equation}
I_1\ls I_{11}+I_{12}\ls  T^{\delta} \|u_1\|_{\frac14,b}\|u_2\|_{\frac14,b} \|u_3\|_{\frac14,b}\|u_4\|_{0,-b'}.
\end{equation}

Finally, we turn to $I_2$: By Cauchy-Schwarz, \eqref{eq:str2-sub} and Bernstein we obtain
\begin{align*}
I_2\ls & \sum_{\mu \geq 2}\mu^{-2}\|P_\mu (u_1\overline{u}_2)\|_{L^2}\|P_\mu (\lb D\rb^s u_3\overline{u}_4)\|_{L^2}\\
\ls&\|u_1\|_{s,b}\|u_2\|_{s,b} \sum_{\mu \geq 2}\mu^{\frac14-s}\|P_\mu (\lb D\rb^s u_3\overline{u}_4)\|_{L^2_tL^1_x}\\
\ls&\|u_1\|_{s,b}\|u_2\|_{s,b} \sum_{\mu \geq 2}\mu^{\frac14-s}\|\lb D\rb^s u_3\|_{L^4_tL^2_x}\|u_4\|_{L^4_tL^2_x}\\
\ls& T^\delta\|u_1\|_{s,b}\|u_2\|_{s,b}\|u_3\|_{s,b}\|u_4\|_{0,-b'}
\end{align*}
if $b'<-\frac14$ and $b>\frac12$, $s>\frac14$.
\end{proof}
Next, we consider the radial case.
\begin{pro}\label{pro:bil-est2}
Let $s>0$. There exists $-\frac12<b'<0<\frac12<b\leq b'+1$ and $\delta>0$, such that
\begin{equation}\label{eq:bil-est2}
\| |x|^{-1} \ast(u_1\overline{u}_2) \, u_3\|_{s,b'}\ls T^{\delta} \|u_1\|_{s,b}\|u_2\|_{s,b}\|u_3\|_{s,b}
\end{equation}
for all $u_j\in X^{s,b}_{rad}$ with $\supp(u_j)\subset \{(t,x): |t|\leq T\}$.
\end{pro}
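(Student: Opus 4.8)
The plan is to run the argument of Proposition~\ref{pro:bil-est1}, replacing the non-radial bilinear estimate \eqref{eq:str2} (and its corollary \eqref{eq:str2-sub}) by the radial bilinear estimates of Lemma~\ref{lem:bil-str} and \eqref{eq:inter4} wherever the former are too weak. As there, one reduces --- assuming without loss $0<s<\tfrac14$ (the range $s\ge\tfrac14$ being contained in Proposition~\ref{pro:bil-est1}) and that all $u_j$ have non-negative Fourier transforms --- by duality to
\[
I:=\Big|\iint |D|^{-2}(u_1\overline{u}_2)\,u_3\,\langle D\rangle^s\overline{u_4}\;dt\,dx\Big|\ls T^{\delta}\,\|u_1\|_{s,b}\|u_2\|_{s,b}\|u_3\|_{s,b}\|u_4\|_{0,-b'}.
\]
Since $F:=|x|^{-1}\ast(u_1\overline{u}_2)u_3$ is radial, we may replace $u_4$ by its spherical average and so take $u_4\in X^{0,-b'}_{rad}$; and, exactly as in Proposition~\ref{pro:bil-est1}, the factor $T^{\delta}$ is produced from $\supp F\subset\{|t|\le T\}$ via $\|F\|_{s,b'}\ls T^{\eta-b'}\|F\|_{s,\eta}$ for $b'<\eta<\tfrac12$, so that throughout we only need to keep $u_4$ in a norm $\|\cdot\|_{0,b_4}$ with $b_4<\tfrac12$. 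The contribution of $P_{\le2}(u_1\overline{u}_2)$ (and of $P_{\le2}(u_3\overline{u}_4)$) is handled verbatim as in Proposition~\ref{pro:bil-est1} --- that step only uses Hardy--Littlewood--Sobolev, Bernstein, \eqref{eq:inter2} and $\|\cdot\|_{L^4_tL^2_x}\ls\|\cdot\|_{0,1/4}$, at no cost in $s$ --- so we may assume $P_1(u_1\overline{u}_2)=P_1(u_3\overline{u}_4)=0$ and split $I\le I_1+I_2$, moving $\langle D\rangle^s$ onto $|D|^{-2}(u_1\overline{u}_2)$ resp.\ onto $u_3$, as there.

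The crucial new input is that \eqref{eq:inter4} bounds a high$\times$high interaction of radial functions, $\|P_\mu(P_\lambda\widetilde v_1\,P_\lambda\widetilde v_2)\|_{L^2}\ls\mu^{1+\delta_0}\|P_\lambda v_1\|_{0,b_4}\|P_\lambda v_2\|_{0,b_4}$ for $\mu\le\lambda$ (with $\delta_0>0$ arbitrary and a corresponding $b_4<\tfrac12$), by a quantity \emph{independent of $\lambda$} and involving only temporal regularity $b_4<\tfrac12$ on \emph{both} factors --- whereas the non-radial \eqref{eq:str2} only gives the $\lambda$-growing bound $\mu^{1/2}\lambda^{1/2}\|P_\lambda v_1\|_{0,b}\|P_\lambda v_2\|_{0,b}$, whose $\lambda$-summation diverges for $s<\tfrac14$. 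Decomposing $u_1\overline{u}_2$ and $\langle D\rangle^s u_3\overline{u}_4$ into Littlewood--Paley pieces and applying \eqref{eq:inter4} to the high$\times$high pieces $\lambda_1\sim\lambda_2=\lambda\ge\mu$, a Cauchy--Schwarz sum over $\lambda\ge\mu$ (distributing derivatives and using $\sup_{\lambda\ge\mu}\lambda^{-2s}=\mu^{-2s}$ in the first product) gives
\[
\|P_\mu(u_1\overline{u}_2)_{\mathrm{hh}}\|_{L^2}\ls\mu^{1+\delta_0-2s}\|u_1\|_{s,b}\|u_2\|_{s,b},\qquad \|P_\mu(\langle D\rangle^s u_3\overline{u}_4)_{\mathrm{hh}}\|_{L^2}\ls\mu^{1+\delta_0}\|u_3\|_{s,b}\|u_4\|_{0,b_4}.
\]
Inserting these into $I_2\ls\sum_{\mu\ge2}\mu^{-2}\|P_\mu(u_1\overline{u}_2)\|_{L^2}\|P_\mu(\langle D\rangle^s u_3\overline{u}_4)\|_{L^2}$, the high$\times$high$\times$high$\times$high contribution is $\ls\sum_{\mu\ge2}\mu^{2\delta_0-2s}\prod_j(\text{norms})$, which converges once $\delta_0<s$; the analogous bookkeeping for $I_1=I_{11}I_{12}$ with $|D|^{-2}=|D|^{-1}|D|^{-1}$ converges precisely for $\delta_0<s$ as well. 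This is the borderline contribution, and it is what pins the threshold at $s>0$.

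The frequency configurations not covered by Lemma~\ref{lem:bil-str} --- two inputs of comparable frequency $\sim\mu$, or one input of frequency $\sim\mu$ against one of strictly smaller frequency $\nu\le\mu$ --- are disposed of as in Proposition~\ref{pro:bil-est1}, using \eqref{eq:str2} and, wherever $u_4$ occurs, its (complex) interpolation in the $u_4$-variable with the Bernstein bound $\mu^{3/2}\|\cdot\|_{L^\infty_tL^2_x}\|\cdot\|_{L^2_tL^2_x}$: this brings $u_4$ into a norm of temporal regularity $<\tfrac12$ at the price of a power of $\mu$ that stays $\ls\mu^{1+\varepsilon}$, and the small $\nu$-power $\nu^{1/4}$ that \eqref{eq:str2} carries on the low-frequency factor produces, after summation over $\nu\le\mu$, only an extra $\mu^{1/4}$, which is harmless (since here $|D|^{-2}$ has already supplied $\mu^{-2}$). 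One thus gets for every such piece $\|P_\mu(u_1\overline{u}_2)\|_{L^2}\ls\mu^{1+\delta_0-2s}$ and $\|P_\mu(\langle D\rangle^su_3\overline{u}_4)\|_{L^2}\ls\mu^{1+\varepsilon}$ (up to dyadic pieces of the inputs, recombined by an outer Cauchy--Schwarz over $\mu$), whence each piece of $I_1,I_2$ is $\ls\sum_{\mu\ge2}\mu^{-cs}\prod_j(\text{norms})$ with $c>0$ and converges for all $s>0$. Choosing $\delta_0,\varepsilon\in(0,s)$, then a suitable $b_4<\tfrac12$, then $b'<\eta<\tfrac12$ with $b_4\le-\eta$ and $\tfrac12<b\le b'+1$, $b>b_4$, and setting $\delta:=\eta-b'$ completes the proof.

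The main obstacle is the tension between the hypothesis $b>\tfrac12$ present in all the available linear and bilinear estimates and the requirement --- forced by the duality that manufactures $T^{\delta}$ --- that $u_4$ appear only in a norm of temporal regularity $<\tfrac12$; this is what makes the crude Bernstein bound of Proposition~\ref{pro:bil-est1} insufficient and forces the systematic use of the interpolated estimates \eqref{eq:inter4} and \eqref{eq:str2}$\times$Bernstein, at the cost of an $\varepsilon$-loss in the powers of $\mu$ that is exactly absorbed by the spare $s>0$ derivatives. Checking that in \emph{every} frequency configuration the net power of $\mu$ (after the $|D|^{-2}$-smoothing, the bilinear gain, and the low-frequency summations) is strictly below the scaling-critical value --- so that the geometric series converges precisely for $s>0$, in accordance with the ill-posedness of Theorem~\ref{thm:ill-unif} at $s=0$ --- is the delicate part of the computation.
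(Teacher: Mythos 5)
Your argument is correct and is essentially the paper's own proof: the same duality reduction and splitting into $I_1+I_2$, with the radial bilinear estimates \eqref{eq:bil-str}/\eqref{eq:inter4} absorbing the high--high interactions and interpolated bounds (the paper uses \eqref{eq:inter3}) keeping the dual function $u_4$ at temporal exponent below $\tfrac12$; the paper merely organizes the bookkeeping slightly differently (splitting $|D|^{-2}=|D|^{-1}\cdot|D|^{-1}$ and estimating two separate $L^2$ norms instead of your $\sum_\mu \mu^{-2}$ pairing) and squeezes out $T^\delta$ by lowering the exponent on $u_4$ rather than on the output. Only cosmetic caveats: your reduction to $0<s<\tfrac14$ leaves the single value $s=\tfrac14$ uncovered (the paper works with $0<s<\tfrac12$ directly, and your argument goes through verbatim in that range), while your remark that $u_4$ may be taken radial makes explicit a point the paper uses implicitly when applying \eqref{eq:inter4}.
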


\begin{proof}
As above, by duality, it suffices to prove
\begin{equation}\label{eq:bil-dual2}
\begin{split}
I:=&\Big|\iint |D|^{-2}(u_1\overline{u}_2) u_3 \lb D\rb^s
\overline{u_4} dt dx \Big|\\
\ls& T^{\delta} \|u_1\|_{s,b}\|u_2\|_{s,b}\|u_3\|_{s,b}
\|u_4\|_{0,-b'}
\end{split}
\end{equation}
for $0<s<\frac12$.
It suffices to consider $u_j$ with non-negative Fourier-transform. Then, we may split the left hand side into two terms:
\begin{align*}
I\ls& \Big|\iint \lb D\rb^s |D|^{-2}(u_1\overline{u}_2) u_3
\overline{u_4} dt dx \Big| +\Big|\iint |D|^{-2}(u_1\overline{u}_2) \lb
D\rb^s u_3 \overline{u_4} dt dx \Big|\\
=:&I_1+I_2.
\end{align*}
The argument in the proof of Proposition \ref{pro:bil-est1} shows that we may henceforth assume that $P_1(u_1\overline{u}_2)=P_1(u_3\overline{u}_4)=0$.
First, we consider the contribution of $I_1$:
\[
I_1\leq \|\lb D\rb^s|D|^{-1}(u_1\overline{u}_2)\|_{L^2}\||D|^{-1}(u_3\overline{u}_4)\|_{L^2} \\
\ls I_{11}\cdot I_{12}
\]
We have (dyadic summation)
\begin{align*}
I_{11}
\ls &\sum_{\lambda_1\ll \lambda_2} \lambda_2^{s-1}\|u_{1,\lambda_1}\overline{u}_{2,\lambda_2}\|_{L^2}+\sum_{\lambda_1\gg \lambda_2} \lambda_1^{s-1}\|u_{1,\lambda_1}\overline{u}_{2,\lambda_2}\|_{L^2}
\\&+\sum_{\mu \ls \lambda}
\mu^{s-1}\|P_{\mu}(u_{1,\lambda}\overline{u}_{2,\lambda})\|_{L^2}
\end{align*}
We start estimating the first term by using H\"older's inequality and \eqref{eq:str1b}:
\begin{align*}
\sum_{\lambda_1\ll \lambda_2}
\lambda_2^{s-1}\|u_{1,\lambda_1}\overline{u}_{2,\lambda_2}\|_{L^2}
\ls&\sum_{\lambda_1\ll \lambda_2} \lambda_2^{s-\frac12}\lambda_1^{\frac12}\|u_{1,\lambda_1}\|_{0,b}\|u_{2,\lambda_2}\|_{0,b}\\
\ls &\sum_{k=0}^\infty \sum_{0\leq l \leq k} 2^{-l(\frac12-s)}\|u_{1,2^{k-l}}\|_{s,b}\|u_{2,2^k}\|_{0,b}\\
\ls &\|u_{1}\|_{s,b}\|u_{2}\|_{0,b}.
\end{align*}
For the second term, we obtain the same result.
The third term is estimated by using \eqref{eq:bil-str}:
\begin{align*}
\sum_{\mu \ls \lambda}
\mu^{s-1}\|P_{\mu}(u_{1,\lambda}\overline{u}_{2,\lambda})\|_{L^2}
\ls & \sum_{\mu \ls \lambda}
\mu^{s}\|u_{1,\lambda}\|_{0,b}\|u_{2,\lambda}\|_{0,b}\\
\ls &\|u_{1}\|_{s,b}\|u_{2}\|_{0,b}.
\end{align*}

Next, we decompose
$I_{12}=J_1+J_2+J_3$.
Using \eqref{eq:inter3} with $\delta=s$ we see that
\begin{align*}
J_1:=&\sum_{\lambda_3\ll \lambda_4} \lambda_4^{-1}\|u_{3,\lambda_3}\overline{u}_{4,\lambda_4}\|_{L^2}
\ls \sum_{\lambda_3\ll \lambda_4} \lambda_3^{\frac12}\lambda_4^{-\frac12+s}\|u_{3,\lambda_3}\|_{0,b}\|u_{4,\lambda_4}\|_{0,-b'}\\
\ls& \|u_{3}\|_{s,b}\|u_{4}\|_{0,-b'}
\end{align*}
for some $b'>-\frac12$.
Similarly,
\[
J_2:= \sum_{\lambda_3\gg \lambda_4} \lambda_3^{-\frac12}\lambda_4^{\frac12+s}\|u_{3,\lambda_3}\|_{0,b}\|u_{4,\lambda_4}\|_{0,-b'}\ls \|u_{3}\|_{s,b}\|u_{4}\|_{0,-b'}.
\]
Using \eqref{eq:inter4} we obtain
\begin{align*}
J_3:=&\sum_{\mu \ls \lambda} \mu^{-1}\|P_{\mu}(u_{3,\lambda}\overline{u}_{4,\lambda})\|_{L^2}
\ls \sum_{\mu \ls \lambda} \mu^{s}\|u_{3,\lambda}\|_{0,b}\|u_{4,\lambda}\|_{0,-b'}\\\ls&  \|u_{3}\|_{s,b}\|u_{4}\|_{0,-b'}
\end{align*}
for some $b'>-\frac12$. All in all, we have proved
\[
I_1\ls \|u_{1}\|_{s,b}\|u_{2}\|_{0,b} \|u_{3}\|_{s,b}\|u_{4}\|_{0,-b'}.
\]
Concerning $I_2$, we obtain
\[
I_2\leq \|\lb D\rb^s |D|^{-1}(u_1\overline{u}_2)\|_{L^2}\|\lb D\rb^{-s}|D|^{-1}(\lb D\rb^s u_3 \overline{u}_4)\|_{L^2} \\
\ls I_{21}\cdot I_{22}
\]
The estimate for $I_{11}$ above also applies to $I_{21}$. As above, we decompose
$I_{22}=K_1+K_2+K_3$. Trivial modifications of the arguments above yield
\[
I_{22}\ls \|u_{3}\|_{s,b}\|u_{4}\|_{0,-b'},
\]
and altogether we obtain
\[
I_2\ls \|u_{1}\|_{s,b}\|u_{2}\|_{0,b} \|u_{3}\|_{s,b}\|u_{4}\|_{0,-b'}.
\]
By replacing $\|u_{4}\|_{0,-b'}$ by $\|u_{4}\|_{0,-b''}$ for some $-\frac12<b''<b'$ shows that one can squeeze out a factor $T^\delta$.
\end{proof}

Finally, we explain how Propositions \ref{pro:bil-est1} and
\ref{pro:bil-est2} imply Theorem \ref{thm:lwp}: The general idea -- due to
Bourgain \cite{B93} -- is well-known by now,
see e.g. \cite{GTV97} and references therein for more details. The
basic idea is to solve the integral equation
\begin{equation}\label{eq:duhamel}
u(t)=\psi_TU(t)\phi +i\psi_T\int_0^t U(t-\tau) \big(|x|^{-1} \ast |U(\tau) \phi|^2 U(\tau) \phi \big)
d\tau
\end{equation}
with the smooth cutoff $\psi_T(t)=\beta_1(t/T)$ and given initial data $\phi$ by the contraction
mapping principle.
For $b>\frac12$ \cite[(2.19)]{GTV97} implies
\[\|\psi_TU(t)\phi \|_{s,b}\ls T^{\frac12-b}\|\phi\|_{H^s} \]
Also,  \cite[Lemma 2.1]{GTV97} implies that
\[\| \psi_T\int_0^t U(t-\tau)f(\tau)d\tau \|_{s,b}\ls  T^{1-b+b'} \|f\|_{s,b'} .\]
if $-\frac12<b'<0<\frac12<b\leq b'+1$.
Now, due to Propositions \ref{pro:bil-est1} and
\ref{pro:bil-est2}, it is an easy exercise to verify that the right
hand side in \eqref{eq:duhamel} defines a contraction in an
appropriate closed ball in $X^{s,b}\subset C([0,T];H^s(\R^3))$ and $X^{s,b}_{rad}\subset C([0,T];H^s_{rad}(\R^3))$, respectively. Hence, it has a fixed
point. Uniqueness in $X^{s,b}$ resp. $X^{s,b}_{rad}$ and smooth (real analytic) dependence on the initial data are immediate consequences.
\section{Ill-posedness}\label{sect:ill}
\subsection{Proof of Theorem \ref{thm:ill-c3}}\label{subsect:ill-c3}
\noindent
As explained in \cite[Section 2.2]{MST01} (for the quadratic Benjamin-Ono equation), it suffices to prove the following:
\begin{pro}\label{pro:fail}
For fixed $0<t\leq 1$ and $s<\frac14$ the inequality
\begin{equation}\label{eq:fail}
\Big\|\int_0^t U(t-\tau)
\big(|x|^{-1} \ast |U(\tau) \phi|^2 U(\tau) \phi \big)
d\tau \Big\|_{H^s(\R^3)}\ls \|\phi \|^3_{H^s(\R^3)}
\end{equation}
fails to hold for all $\phi \in H^s(\R^3)$ (in any neighborhood of the origin).
\end{pro}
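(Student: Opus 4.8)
We must show that \eqref{eq:fail} cannot hold, i.e.\ exhibit a family $\phi_N\in H^s(\R^3)$ along which the ratio of its two sides tends to infinity; since \eqref{eq:fail} is homogeneous of degree three on both sides, such a family (normalised as convenient) also yields the failure in every neighbourhood of the origin. We work with $m=0$, so that $U(t)=e^{-it\sqrt{-\Delta}}$; the case $m>0$ is reduced to this as noted after \eqref{eq:boson-star2}, or is handled by repeating the computation below with $\varphi_m$ in place of $\varphi_0$. Expanding the Duhamel term in \eqref{eq:fail} on the Fourier side gives
\[
\mathcal F\Big(\textstyle\int_0^t U(t-\tau)\big(|x|^{-1}\ast|U(\tau)\phi|^2\,U(\tau)\phi\big)\,d\tau\Big)(\xi)
= c\,e^{-it|\xi|}\!\iint \tfrac{e^{it\Psi}-1}{i\Psi}\,|\xi_1-\xi_2|^{-2}\,\widehat\phi(\xi_1)\,\overline{\widehat\phi(\xi_2)}\,\widehat\phi(\xi_3)\,d\xi_1\,d\xi_3,
\]
the integral being over $\xi_1,\xi_3\in\R^3$ with $\xi_2=\xi_1+\xi_3-\xi$, and
\[
\Psi=\Psi(\xi_1,\xi_2,\xi_3)=-|\xi_1|+|\xi_2|-|\xi_3|+|\xi_1-\xi_2+\xi_3|.
\]
The time factor $\tfrac{1}{i\Psi}(e^{it\Psi}-1)$ has modulus $\approx t$ on the resonant region $\{|\Psi|\lesssim 1\}$, and $\Psi$ vanishes whenever $\xi_1,\xi_2,\xi_3$ are positive multiples of a common unit vector (with $|\xi_1|+|\xi_3|\ge|\xi_2|$). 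This degeneracy along collinear (``null'') frequencies --- absent for the $L^2$-critical NLS but characteristic of wave equations --- is what makes ill-posedness possible above the scaling exponent.

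The plan is to choose $\widehat{\phi_N}\ge 0$ supported on a bounded number of frequency boxes located at two (or more) widely separated dyadic scales, one of size $\sim N$ and one of size $\sim 1$. A single scale cannot suffice: at $m=0$ the equation is $L^2$-scale invariant and the trilinear map commutes with the scaling, so on any block with all inputs and output at one common frequency the map $\phi\mapsto(\text{Duhamel term})$ is bounded $H^s\to H^s$ with a scale-independent constant; the asymmetry of the weight $\langle\xi\rangle^s$ between the two scales is precisely what will break the estimate, and pinning down this balance is what singles out $s=\tfrac14$. The boxes at the scale $N$ should be thin caps/slabs adapted to the light cone --- a Knapp-type configuration --- chosen so as to saturate the localised $L^4$ wave--Strichartz bound of Lemma \ref{lem:str}(i) and the bilinear estimate \eqref{eq:str2}; these are exactly the ingredients in the proof of Proposition \ref{pro:bil-est1} whose use becomes lossy once $s\le\tfrac14$, visible there in the divergence of $\sum_{\mu}\mu^{1/4-s}$ bounding the term $I_2$. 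With the geometry arranged so that the triple product $\widehat{\phi_N}(\xi_1)\overline{\widehat{\phi_N}(\xi_2)}\widehat{\phi_N}(\xi_3)$ lives where $|\Psi|$ stays bounded and the output frequency $\xi$ ranges over a set of positive measure, one restricts the $(\xi_1,\xi_3)$-integral further to the subset where $\arg\!\big(\tfrac{1}{i\Psi}(e^{it\Psi}-1)\big)$ stays within, say, $\tfrac{1}{10}$ of a constant. On that subset there is no cancellation --- $\widehat{\phi_N}\ge 0$, the kernel $|\xi_1-\xi_2|^{-2}$ is positive, and the time factor is $\approx t$ --- so one obtains a clean lower bound for the modulus of the Fourier transform above, equal to $t$ times the kernel times the measure of the interaction region; integrating $\langle\xi\rangle^{2s}$ over the output set then bounds the $H^s$-norm from below. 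Balancing box dimensions and the relative amplitudes of the two scales against $\|\phi_N\|_{H^s}^3$ should yield a lower bound of the form $t\,N^{\varepsilon(s)}\,\|\phi_N\|_{H^s}^3$ with $\varepsilon(s)>0$ for every $s<\tfrac14$, so that $N\to\infty$ contradicts \eqref{eq:fail}.

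The main obstacle is the construction and its bookkeeping rather than any single hard estimate: one must fix box dimensions, amplitudes and the relative position of the two scales so that (i) the selected resonant interaction is not cancelled, neither by the complementary non-resonant part of the $(\xi_1,\xi_3)$-integral nor by the other products arising because $\phi_N$ is a sum of several pieces --- which is exactly why the positivity of $\widehat{\phi_N}$ and a nearly-constant-phase sub-region are essential --- and (ii) the interplay between the two frequency scales and the weight $\langle\xi\rangle^s$ falls on the correct side of $s=\tfrac14$; for radial data the same scheme applies with Lemma \ref{lem:bil-str} in place of Lemma \ref{lem:str}, which pushes the threshold down to $s=0$. Once $\phi_N$ is fixed the remaining work is elementary: a second-order Taylor expansion of $\Psi$ near the collinear set to locate the region where $|\Psi|\lesssim 1$, and volume counting for the $|\xi_1-\xi_2|^{-2}$-weighted integrals, $|x|^{-1}\ast$ being up to a constant the Fourier multiplier $|D|^{-2}$.
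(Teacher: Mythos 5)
Your Fourier-side formula for the Duhamel term, the identification of the nearly parallel (resonant) interactions on which the time factor contributes a full factor $t$, and the use of data with nonnegative Fourier transform to prevent cancellation are all consistent with the paper's argument. But what you have written is a plan, not a proof: the entire content of the proposition --- a concrete choice of $\phi$, the verification that the selected interaction region has the claimed size, and the balance showing the lower bound beats $\|\phi\|_{H^s}^3$ exactly when $s<\frac14$ --- is deferred (``balancing box dimensions and the relative amplitudes \dots should yield \dots''), and no such balance is exhibited anywhere in your text.

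More importantly, the structural claim guiding your construction is false and points you away from the actual mechanism. You assert that a single frequency scale cannot suffice because, by $L^2$-scale invariance, on a block with all inputs and output at one common frequency the trilinear map would be bounded on $H^s$ with a scale-independent constant, so that two widely separated scales ($\sim N$ and $\sim 1$) are needed and the asymmetry of $\lb \xi \rb^s$ between them pins down $s=\frac14$. The scaling heuristic fails because rescaling also dilates the time interval and the inhomogeneous weight $\lb \xi \rb^s$ enters once on the left but three times on the right; and indeed the paper's counterexample is a \emph{single-scale} configuration: $\widehat\phi=\chi_{W^+_\lambda}$, one Knapp-type cube of side $\sim\mu=\delta\lambda^{1/2}$ centered at $\lambda e_1$, with all input and output frequencies of size $\sim\lambda$. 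The decisive mechanism is that the $u\overline{u}$ interaction between frequencies near $+\lambda e_1$ and $-\lambda e_1$ produces convolution frequencies of size $\ls\mu$, where the Coulomb multiplier $|\eta|^{-2}$ contributes $\gs\mu^{-2}$, while the Knapp width $\lambda^{1/2}$ keeps the resonance function of size $O(\delta^2)+O(\lambda^{-1})\ll 1$ uniformly for $0<t\leq 1$, so the time factor is $\approx t$; volume counting then gives $|F_t(\xi)|\gs t\mu^4$ for $\xi\in\frac14 W^+_\lambda$, hence a ratio of left to right side of \eqref{eq:fail} of order $t\,\delta\,\lambda^{\frac12-2s}$, which blows up precisely for $s<\frac14$ (and works for every fixed $m\geq0$, no reduction to $m=0$ needed). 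Your two-scale Ansatz does not contain this mechanism: the cross terms coupling a unit-scale piece to an $N$-scale piece either see a kernel of size $\approx N^{-2}$ or produce no growing factor for $0<s<\frac14$, so as it stands the proposal neither carries out the decisive computation nor identifies a configuration that would produce it.
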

\begin{proof}
Let $1\leq \mu \ll \lambda$. We will choose $\mu=\mu(\lambda)=\delta \lambda^{\frac12}$ for fixed $0<\delta\ll 1$. Define the cube \[W_\lambda^\pm =\{(\xi_1,\xi_2,\xi_3) \in \R^3 \colon |\xi_1\mp \lambda|\leq \mu , \, |\xi_2|, |\xi_3|\leq \mu \},\]
which is centered at $\pm \lambda e_1$ with sidelength $2\mu$. Let $\phi$ be the inverse Fourier transform of the characteristic function $\chi_{W^+_\lambda}$ of $W^+_\lambda$. Obviously, $\|\phi\|_{H^s(\R^3)}\approx \mu^{\frac32} \lambda^s$.

Next, we consider
\[
F_t(\xi):=\mathcal{F}_x\Big( \int_0^t U(t-\tau)
\big( |x|^{-1} \ast |U(\tau) \phi|^2 U(\tau) \phi \big)
d\tau \Big)(\xi).
\]
Our aim is to prove that for $0<t\ll 1$ and all $\xi \in \tfrac14 W^+_\lambda$
\begin{equation}\label{eq:red}
 |F_t(\xi)|\gs |t| \mu^{4}.
\end{equation}
Assuming that \eqref{eq:red} holds, the claim follows since the validity of \eqref{eq:fail} implies
\[ |t| \mu^{\frac{11}{2}}\lambda^s \ls \|\lb \xi \rb^s F_t(\xi)\|_{L^2_\xi(\R^3)}\ls \mu^{\frac92} \lambda^{3s}
\]
which for $\mu=\delta \lambda^{\frac12}$ is equivalent to
\[
|t| \delta \ls  \lambda^{2s-\frac12},
\]
which can hold for fixed $t,\delta>0$ and $\lambda \to \infty$ only if $s\geq \frac14$. Hence, it suffices to establish \eqref{eq:red}: Similarly to  \cite[p.985]{MST01} we compute
\begin{align*}
&F_t(\xi)\\
=&c e^{it \varphi_m(\xi)}\int_{\R^3}\int_{\R^3} \frac{(e^{it r_m(\xi_1,\xi_2,\xi)}-1)}{ir_m(\xi_1,\xi_2,\xi)}\frac{\chi_{W^+_{\lambda}}(\xi_1)\chi_{W^-_{\lambda}}(\xi_2)}{|\xi_1+\xi_2|^{2}}\chi_{W^+_{\lambda}}(\xi-\xi_1-\xi_2) d\xi_1d\xi_2,
\end{align*}
where
\[
r_m(\xi_1,\xi_2,\xi)=\varphi_m(\xi_1)-\varphi_m(\xi_2)+\varphi_m(\xi-\xi_1-\xi_2)-\varphi_m(\xi).
\]
We notice that in the domain of integration we have
\[
||\xi_1|-|\xi_2|+|\xi-\xi_1-\xi_2|-|\xi||\ls \frac{\mu^2}{\lambda},
\]
hence, for each fixed $m \geq 0$,
\[
|r_m(\xi_1,\xi_2,\xi)|\ls \frac{1}{\lambda}+\frac{\mu^2}{\lambda}\ll 1 \text{ (by choosing $\delta>0$ small enough)}.
\]
Therefore, if $\xi \in \frac14 W^+_{\lambda}$ we have
\[
|F_t(\xi)|\gs |t| \int_{\frac14 W^+_{\lambda}}\int_{\frac14 W^-_{\lambda}} |\xi_1+\xi_2|^{-2} d\xi_1d\xi_2\gs |t| \mu^4,
\]
and the proof is complete.
Notice that (by multiplying the above function $\phi$ by a small fixed parameter) we can provide such a counterexample in any neighbourhood of the origin.
\end{proof}

\begin{pro}\label{pro:fail-rad}
For fixed $T>0$ and $s<0$ the inequality
\begin{equation}\label{eq:fail-rad}
\sup_{t \in [0,T]}\Big\|\int_0^t U(t-\tau)
\big(|x|^{-1} \ast |U(\tau) \phi|^2 U(\tau) \phi \big)
d\tau \Big\|_{H^s(\R^3)}\ls \|\phi \|^3_{H^s(\R^3)}
\end{equation}
fails to hold for all radial $\phi \in H^s(\R^3)$ (in any neighborhood of the origin).
\end{pro}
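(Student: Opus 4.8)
The plan is to run the argument of Proposition~\ref{pro:fail} with the pair of off-center cubes $W^\pm_\lambda$ replaced by a single thin \emph{radial shell}. The one genuine difference is the resonance function $r_m$: in Proposition~\ref{pro:fail} the antipodal cubes centered at $\pm\lambda e_1$ are arranged so that the $O(\mu)$ contributions to $r_m$ cancel, leaving $O(\mu^2/\lambda)$, and this is what permits the choice $\mu\es\lambda^{1/2}$; a genuinely radial shell admits no such cancellation, so we are forced to keep the thickness $\mu$ bounded. This loss is precisely what moves the threshold from $s=\tfrac14$ down to $s=0$.

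So, fix $m\ge0$ and $T>0$, put $t:=\min\{1,T\}$, fix once and for all a constant $\mu\in(0,\tfrac12)$, and for $\lambda\gg1$ set
\[
A_\lambda:=\{\xi\in\R^3:\lambda-\mu\le|\xi|\le\lambda+\mu\},\qquad \phi:=\phi_\lambda:=\mathcal F^{-1}\chi_{A_\lambda}.
\]
Then $\phi$ is radial and real, with $\|\phi\|_{H^s(\R^3)}^2\es\lambda^{2s}|A_\lambda|\es\lambda^{2s+2}\mu$. Since $A_\lambda=-A_\lambda$ and $\chi_{A_\lambda}$ is even, the complex-conjugated factor in the cubic nonlinearity again has Fourier support in $A_\lambda$, so the computation in the proof of Proposition~\ref{pro:fail} carries over verbatim and gives, with the same positive constant $c$ and the same $r_m$,
\[
F_t(\xi)=c\,e^{it\varphi_m(\xi)}\iint\frac{e^{it r_m(\xi_1,\xi_2,\xi)}-1}{i\,r_m(\xi_1,\xi_2,\xi)}\;\frac{\chi_{A_\lambda}(\xi_1)\,\chi_{A_\lambda}(\xi_2)}{|\xi_1+\xi_2|^{2}}\;\chi_{A_\lambda}(\xi-\xi_1-\xi_2)\,d\xi_1\,d\xi_2 .
\]

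The core step is to show that $|F_t(\xi)|\gs|t|\,\lambda^{2}\mu^{2}$ for every $\xi$ in the target shell $T_\lambda:=\{\xi:\lambda-\mu/8\le|\xi|\le\lambda+\mu/8\}$. I would obtain this by discarding all of the $(\xi_1,\xi_2)$-integral except the region where $\lambda-\mu/2\le|\xi_1|\le\lambda+\mu/2$ and $|\xi_1+\xi_2|\le\mu/8$. A routine application of the triangle inequality for $|\cdot|$ shows that on this region all three cutoffs equal $1$; and since $\varphi_m$ depends on $\xi$ only through $|\xi|$ and is $1$-Lipschitz in $|\xi|$ uniformly in $m\ge0$ (because $|\varphi_m(a)-\varphi_m(b)|=|a^{2}-b^{2}|/(\sqrt{m^{2}+a^{2}}+\sqrt{m^{2}+b^{2}})\le|a-b|$), while all four frequencies $\xi_1,\xi_2,\xi-\xi_1-\xi_2,\xi$ lie at radius $\lambda+O(\mu)$, one gets $|r_m|\le4\mu<\pi$ on the whole support of the integrand and $|t\,r_m|\le\tfrac32\mu\le1$ on the retained region. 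Consequently $\Real\big[(e^{itr_m}-1)/(ir_m)\big]=\sin(tr_m)/r_m\ge0$ throughout the support and $\ge t\sin1>0$ on the retained region, so taking real parts and dropping the sign-definite remainder,
\[
|F_t(\xi)|\ \gs\ |t|\int_{\lambda-\mu/2\le|\xi_1|\le\lambda+\mu/2}\ \int_{|\xi_1+\xi_2|\le\mu/8}\frac{d\xi_2\,d\xi_1}{|\xi_1+\xi_2|^{2}}\ \es\ |t|\,(\lambda^{2}\mu)\cdot\mu\ =\ |t|\,\lambda^{2}\mu^{2}.
\]

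Finally, if \eqref{eq:fail-rad} held then, using that $F_t$ is the spatial Fourier transform of the Duhamel term at our fixed $t\le T$, together with $|T_\lambda|\es\lambda^{2}\mu$ and $\lb\xi\rb\es\lambda$ on $T_\lambda$,
\[
|t|\,\lambda^{s+3}\mu^{5/2}\ \es\ \Big(\int_{T_\lambda}\lb\xi\rb^{2s}(|t|\lambda^{2}\mu^{2})^{2}\,d\xi\Big)^{1/2}\ \ls\ \|\lb\xi\rb^{s}F_t\|_{L^2_\xi}\ \ls\ \|\phi\|_{H^s}^{3}\ \es\ \lambda^{3s+3}\mu^{3/2},
\]
i.e.\ $|t|\,\mu\ls\lambda^{2s}$, which is impossible for fixed $t,\mu>0$ and $s<0$ as $\lambda\to\infty$. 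Since both sides of \eqref{eq:fail-rad} are homogeneous of degree $3$ in $\phi$, replacing $\phi$ by $\eps\phi$ with $\eps\to0$ yields the same contradiction in an arbitrarily small neighbourhood of the origin. The only mildly delicate point I foresee is the bookkeeping in the displayed lower bound for $|F_t|$---verifying that the three characteristic functions are simultaneously $1$ and that $r_m$ stays $O(\mu)$ on a set of $(\xi_1,\xi_2)$-measure $\es\lambda^{2}\mu^{2}$ with $|\xi_1+\xi_2|\ls\mu$; everything else is routine and parallel to the proof of Proposition~\ref{pro:fail}.
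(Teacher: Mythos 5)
Your argument is correct, and it reaches the conclusion by a route that differs from the paper's in an interesting way, even though both rest on the same Mourre--Saut--Tzvetkov-type formula for $F_t$ and the same final power counting. The paper takes the \emph{thick} annulus $A_\lambda=\{\lambda\le|\xi|\le2\lambda\}$, for which the resonance $r_m$ is only $O(\lambda)$, and compensates by evaluating at the $\lambda$-dependent time $t=\delta\lambda^{-1}$, so that $|t\,r_m|\ll1$; this is exactly why the statement \eqref{eq:fail-rad} carries the supremum over $t\in[0,T]$, and the contradiction there reads $\delta\ls\lambda^{2s}$. You instead keep the time fixed and shrink the \emph{thickness} of the shell to a fixed constant $\mu$, and you extract the needed smallness of $r_m$ from the structural fact that $\varphi_m$ is a function of $|\xi|$ alone and $1$-Lipschitz in the radius, so that all four frequencies lying within $O(\mu)$ of the sphere $|\xi|=\lambda$ forces $|r_m|\ls\mu$ on the whole support; the positivity of $\Real\bigl[(e^{itr_m}-1)/(ir_m)\bigr]=\sin(tr_m)/r_m$ then lets you discard everything outside the favorable region $|\xi_1+\xi_2|\le\mu/8$, and the count $t\mu\ls\lambda^{2s}$ gives the same threshold $s=0$. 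What your version buys is a formally stronger statement: the fixed-time estimate (as in Proposition \ref{pro:fail}, without the supremum over $[0,T]$) already fails for radial data for every $s<0$, whereas the paper's choice genuinely needs $t\to0$ as $\lambda\to\infty$. What the paper's version buys is lighter bookkeeping: with a thick annulus all support checks are trivial and no real-part/sign argument is needed. Your support verifications (the three cutoffs equal to $1$ on the retained region, $|r_m|\ls\mu$ there and $<\pi$ on the full support), the lower bound $\es t\lambda^2\mu^2$, the norm computations, and the cubic-homogeneity remark for small data are all accurate.
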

\begin{proof}
Let $T>0$ be fixed. For $ \lambda\gg 1$ we define the annulus \[A_\lambda =\{(\xi_1,\xi_2,\xi_3) \in \R^3 \colon \lambda\leq |\xi|\leq 2\lambda \},\]
and let $\phi$ be the inverse Fourier transform of the characteristic function $\chi_{A_\lambda}$ of $A_\lambda$. Obviously, $\phi$ is radial and $\|\phi\|_{H^s(\R^3)}\approx \lambda^{s+\frac32}$.
As above we consider
\[
F_t(\xi):=\mathcal{F}_x\Big(\int_0^t U(t-\tau)
\big(|x|^{-1} \ast |U(\tau) \phi|^2 U(\tau) \phi \big)
d\tau \Big)(\xi).
\]
For $t=\delta\lambda^{-1}$ with $0<\delta\ll 1$ (such that $t<T$) and $\xi \in \tfrac14 A_\lambda$ we will prove
\begin{equation}\label{eq:red-rad}
 |F_t(\xi)|\gs \delta \lambda^{3}.
\end{equation}
The estimate \eqref{eq:fail-rad} in conjuction with \eqref{eq:red-rad} implies
\[ \delta \lambda^{s+\frac{9}{2}} \ls \|\lb \xi \rb^s F_t(\xi)\|_{L^2_\xi(\R^3)}\ls \lambda^{3s+\frac92}
\]
which can hold for fixed $\delta>0$ and $\lambda \to \infty$ only if $s<0$.
It remains to prove \eqref{eq:red-rad}: As above (cp. \cite[p.985]{MST01}) we compute
\begin{align*}
&F_t(\xi)\\
=&c e^{it \varphi_m(\xi)}\int_{\R^3}\int_{\R^3} \frac{(e^{it r_m(\xi_1,\xi_2,\xi)}-1)}{ir_m(\xi_1,\xi_2,\xi)}\frac{\chi_{A_{\lambda}}(\xi_1)\chi_{A_{\lambda}}(\xi_2)}{|\xi_1+\xi_2|^{2}}\chi_{A_{\lambda}}(\xi-\xi_1-\xi_2) d\xi_1d\xi_2,
\end{align*}
where
\[
r_m(\xi_1,\xi_2,\xi)=\varphi_m(\xi_1)-\varphi_m(\xi_2)+\varphi_m(\xi-\xi_1-\xi_2)-\varphi_m(\xi).
\]
Obviously, in the domain of integration we have
\[
|t r_m(\xi_1,\xi_2,\xi)|\ls |t \lambda|\ll 1.
\]
Therefore, if $\xi \in \frac14 A_{\lambda}$ and $t=\delta\lambda^{-1}$ we have
\[
|F_t(\xi)|\gs \delta \lambda^{-1} \int_{\frac14 A_{\lambda}}\int_{A_{\lambda}} |\xi_1+\xi_2|^{-2} d\xi_1d\xi_2\gs  \delta \lambda^3,
\]
and the proof is complete.
As above, we can provide such a counterexample in any neighbourhood of the origin by multiplying $\phi$ by a small parameter.
\end{proof}

\subsection{Proof of Theorem
  \ref{thm:ill-unif}: Failure of Uniform Continuity}\label{subsect:ill-unif}
By exploiting the fact that \eqref{eq:boson-star} exhibits solitary wave solutions, we show failure of (local) uniform continuity of the solution map $\phi \mapsto u(t)$ in $L^2$. In the case $m=0$, the exact scaling symmetry of \eqref{eq:boson-star} simplifies the analysis considerably. From \cite[Appendix A.2]{L07} (see also \cite{LY87}) we recall the existence of radial ground state solutions $Q \in H^{\frac 1 2}(\R^3)$ of
\begin{equation} \label{eq:boson_ground}
\sqrt{-\Delta} \, Q + Q - (|x|^{-1} \ast |Q|^2) Q = 0.
\end{equation}
We divide this subsection by treating first the massless case $m=0$, followed by a discussion of the more complicated situation when $m>0$ hold in \eqref{eq:boson-star}. The following result concerns the massless case. To treat the case $m>0$, we need more elaborate arguments worked out below. 
\begin{pro} \label{pro:uniform}
Suppose that $m =0$ holds in \eqref{eq:boson-star} and let $t>0$. Then the map $\phi \mapsto u(t)$ fails to be uniformly continuous for initial data in the set
\[M= \{ \phi \in L^2(\R^3) \colon \phi \text{ radial }, \; \| \phi \|^2_{L^2} \geq \| Q \|^2_{L^2} \}\]
with respect to the $L^2$-norm.
\end{pro}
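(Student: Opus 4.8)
\emph{Proof proposal.} The plan is to exploit the solitary wave solutions of \eqref{eq:boson-star} together with the exact scaling symmetry available when $m=0$. First I record that, for each $\mu>0$, the function
\[
u_\mu(t,x) := e^{it\mu}\, Q_\mu(x), \qquad Q_\mu(x) := \mu^{3/2}\, Q(\mu x),
\]
is a global, radial solution of \eqref{eq:boson-star} with $m=0$, where $Q$ is the ground state from \eqref{eq:boson_ground}. Indeed, applying the scaling $u\mapsto u_\lambda$ with $\lambda=\mu$ to \eqref{eq:boson_ground} gives $\sqrt{-\Delta}\,Q_\mu + \mu Q_\mu - (|x|^{-1}\ast|Q_\mu|^2)Q_\mu = 0$, and inserting $u_\mu$ into \eqref{eq:boson-star} (using $|u_\mu|^2 = |Q_\mu|^2$) reduces precisely to this elliptic identity. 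A change of variables shows $\|Q_\mu\|_{L^2} = \|Q\|_{L^2}$ for every $\mu>0$, so that $\{Q_\mu : \mu>0\}$ is a bounded subset of $M$ (lying on its boundary).

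Now fix $t>0$ and set $\alpha := \pi/t$. For $n\in\N$ consider the two initial data $\phi_n^{(1)} := Q_n$ and $\phi_n^{(2)} := Q_{n+\alpha}$, both of which lie in $M$. Since dilations act strongly continuously on $L^2(\R^3)$ and $Q\in L^2$, a change of variables gives
\[
\big\|\phi_n^{(1)} - \phi_n^{(2)}\big\|_{L^2} = \big\|Q - Q_{1+\alpha/n}\big\|_{L^2} \longrightarrow 0 \qquad (n\to\infty).
\]
On the other hand, the corresponding solutions at time $t$ are $u_n^{(j)}(t) = e^{it\mu_{n,j}}\,Q_{\mu_{n,j}}$ with $\mu_{n,1}=n$, $\mu_{n,2}=n+\alpha$, so using $\|Q_\mu\|_{L^2}=\|Q\|_{L^2}$ and $t\alpha=\pi$ the triangle inequality yields
\[
\big\|u_n^{(1)}(t) - u_n^{(2)}(t)\big\|_{L^2} \ge \big|1 - e^{it\alpha}\big|\,\|Q\|_{L^2} - \big\|Q_n - Q_{n+\alpha}\big\|_{L^2} = 2\|Q\|_{L^2} - o(1).
\]
Since $Q\not\equiv 0$, the right-hand side is bounded away from $0$ for large $n$; together with the previous display this contradicts (local) uniform continuity of $\phi\mapsto u(t)$ on $M$, which proves the proposition.

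I do not expect a serious obstacle here: the scaling invariance $\|Q_\mu\|_{L^2}=\|Q\|_{L^2}$ is exactly what makes the solitary-wave phases $e^{it\mu}$ separate the solutions while the data converge, so the argument is a direct adaptation of \cite{bkpsv96}. The only points needing (routine) care are that the $u_\mu$ are admissible solutions to which the flow map applies --- here $Q\in H^{1/2}(\R^3)$, so the solitary waves lie in the energy space covered by the $H^{1/2}$ well-posedness theory of \cite{L07} --- and the elementary continuity statement $\|Q-Q_{1+\eps}\|_{L^2}\to 0$ as $\eps\to0$. The genuine work is postponed to the case $m>0$ in Proposition \ref{pro:uniform2}, where the loss of exact scaling forces a separate study of the rescaled elliptic problem for the solitary profiles $Q_\mu$.
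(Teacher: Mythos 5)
Your proposal is correct and follows essentially the same route as the paper: solitary waves $e^{it\mu}Q_\mu$ with the $L^2$-invariant scaling $Q_\mu=\mu^{3/2}Q(\mu\cdot)$, parameters chosen so the data converge in $L^2$ while the phases decohere at time $t$. The only differences are cosmetic — you take $\mu=n$ and $\mu=n+\pi/t$ with a triangle-inequality lower bound, while the paper takes $\mu_{1,2}(n)=\tfrac{\pi}{2t}(n+1)^2,\tfrac{\pi}{2t}n^2$ and computes $\|u_{\mu_1}(t)-u_{\mu_2}(t)\|_{L^2}^2$ exactly.
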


\begin{proof}
We adapt the arguments in \cite{bkpsv96} to our setting here.
Suppose that $m=0$ holds in \eqref{eq:boson-star} and suppose that $Q$ is a ground state solution \eqref{eq:boson_ground}. By scaling, we have that $Q_\mu(x) = \mu^{\frac 3 2} Q(\mu x)$ with $\mu > 0$ solves
\begin{equation*}
\sqrt{-\Delta} \, Q_\mu + \mu Q_\mu - (|x|^{-1} \ast |Q_\mu|^2) Q_\mu = 0.
\end{equation*}
Note that $\| Q_\mu \|_{L^2} = \| Q \|_{L^2}$ reflecting the $L^2$-criticality of \eqref{eq:boson-star}. 

For $\mu_1, \mu_2 > 0$ we consider the solutions
$$
u_{\mu_1}(t,x) = e^{i t \mu_1} Q_{\mu_1}(x), \quad u_{\mu_2}(t,x) = e^{it \mu_2} Q_{\mu_2}(x). 
$$
and we set
$$
I_{\mu_1, \mu_2}(t) = \| u_{\mu_1}(t) - u_{\mu_2}(t) \|_{L^2}^2.
$$
Note that
\begin{align*}
I_{\mu_1, \mu_2} (t) & = \| Q_{\mu_1} \|_{L^2}^2 + \| Q_{\mu_2} \|_{L^2}^2 - 2 \mbox{Re} \, \langle e^{it \mu_1} Q_{\mu_1}, e^{it \mu_2} Q_{\mu_2} \rangle \\
& = 2 \| Q \|_{L^2}^2 - 2 \mbox{Re} \, \left \{ e^{it (\mu_1 - \mu_2)} \left ( \frac{\mu_2}{\mu_1} \right )^{3/2} \int_{\R^3} Q(x) Q \left ( \left ( \frac{\mu_1}{\mu_2} \right ) x \right ) \, dx \right \}  \\
& = 2 \| Q \|_{L^2}^2 - 2 \cos (t (\mu_1 - \mu_2))  \left ( \frac{\mu_2}{\mu_1} \right )^{3/2} \int_{\R^3} Q(x) Q \left ( \left ( \frac{\mu_1}{\mu_2} \right ) x \right ) \, dx 
\end{align*}
Let $t >0$ be fixed and choose the sequences
$$
\mu_1(n) =  \frac{\pi}{2t} (n+1)^2, \quad \mu_2(n) = \frac{\pi}{2 t} n^2 , \quad \mbox{with $n \in \mathbb{N}$.}
$$ 
This choice implies that $t (\mu_1(n)-\mu_2(n)) = \frac{\pi}{2} (2n+1)$ and thus $\cos (t (\mu_1(n)-\mu_2(n)) ) = 0$. Therefore,
$$
I_{\mu_1(n), \mu_2(n)} (t) = 2 \| Q \|_{L^2}^2 \quad \mbox{for all $n \in \mathbb{N}$}.
$$
On the other hand, since $\mu_1(n)/\mu_2(n) \to 1$ as $n \to +\infty$, we easily see that
$$
\lim_{n \to +\infty} I_{\mu_1(n), \mu_2(n)} (0) = 0.$$
But this show that the solution $u(t)$ cannot depend in a uniformly continuous way on the initial datum $\phi$ in the $L^2$-topology. 

This completes the proof of Proposition \ref{pro:uniform}. \end{proof}

We now turn to the case when $m>0$ holds in \eqref{eq:boson-star}. Without loss of generality, we assume that
$
m=1
$
throughout the following. To derive an illposedness result that is analogue to Proposition \ref{pro:uniform}, we utilize solitary wave solutions $u(t,x) = e^{it \mu} Q_\mu(x)$, where $Q_\mu \in H^{1/2}(\R^3)$ are radial positive solutions to
\begin{equation} \label{eq:Qmu}
\sqrt{-\Delta +1} \, Q_\mu + \mu Q_\mu - (|x|^{-1} \ast |Q_\mu|^2) Q_\mu = 0.
\end{equation}
Here $\mu > 0$ can be chosen arbitrarily; in contrast to \cite{LY87}, where $\mu$ arises as a Lagrange multiplier.  However, we need some auxiliary results about a class of $Q_\mu$ that arise from a suitable variational problem. In particular, we are ultimately interested in the limit of large $\mu >0$ and we will show that 
\begin{equation*}
R_\mu(x) = \mu^{-3/2} Q_\mu(\mu^{-1} x)
\end{equation*} 
converges strongly in $L^2$ (up to subsequences) as $\mu \to \infty$ to some positive radial solution $R \in H^{1/2}$ that solves $\sqrt{-\Delta} \, R + R - (|x|^{-1} \ast |R|^2) R =0$. 

Note that $R_\mu \in H^{1/2}$ solves 
\begin{equation} \label{eq:Rmu}
\sqrt{-\Delta + \mu^{-2} } \, R_\mu + R_\mu - (|x|^{-1} \ast |R_\mu|^2) R_\mu = 0
\end{equation} 
if and only if $Q_\mu \in H^{1/2}$ solves \eqref{eq:Qmu}. In order to construct solutions $R_\mu$ (for any $\mu > 0$ given), we define the functional $F_\mu : H^{1/2}(\R^3) \to \R$ given by
\begin{equation*}
F_\mu( u ) = \int_{\R^3} \overline{u} \sqrt{-\Delta + \mu^{-2}} \, u \, dx + \int_{\R^3} |u|^2 \, dx .
\end{equation*}
We consider the following minimization problem
\begin{equation} \label{eq:Fmin}
F^*_\mu = \inf \{ F_\mu(u) : u \in H^{1/2}(\R^3), \; V(u) = 1 \},
\end{equation}
where
\begin{equation*}
V(u) = \int_{\R^3} (|x|^{-1} \ast |u|^2) |u|^2 \, dx .
\end{equation*}
By interpolation inequalities, it is easy to see that $F^*_\mu > 0$ holds. In fact, the infimum is attained.

\begin{lem} \label{lem:Tmu}
For any $\mu > 0$, there exists a radial positive minimizer $T_\mu \in H^{1/2}(\R^3)$ for problem \eqref{eq:Fmin}. Moreover, the function $T_\mu$ satisfies
$$
\sqrt{-\Delta + \mu^{-2}} \, T_\mu + T_\mu - \theta (|x|^{-1} \ast |T_\mu|^2) T_\mu = 0,
$$
where $\theta = \theta(T_\mu) > 0$ is some Lagrange multiplier. In particular, the function $R_\mu = \theta^{1/2} T_\mu \in H^{1/2}(\R^3)$ is a positive radial solution of \eqref{eq:Rmu}. 
\end{lem}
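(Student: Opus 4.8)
The plan is to solve the constrained minimization problem \eqref{eq:Fmin} by the direct method, using symmetric decreasing rearrangement to restore compactness on $\R^3$, and then to read off the Euler--Lagrange equation via the Lagrange multiplier rule. Fix $\mu>0$. Since the Fourier symbol $\sqrt{|\xi|^2+\mu^{-2}}+1$ of $F_\mu$ is comparable to $\lb\xi\rb$ (with constants depending on $\mu$), we have $F_\mu(u)\es\|u\|_{H^{1/2}}^2$; hence any minimizing sequence $(u_n)\subset H^{1/2}(\R^3)$ with $V(u_n)=1$ and $F_\mu(u_n)\to F^*_\mu$ is bounded in $H^{1/2}$. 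The bound $F^*_\mu>0$ is the one already noted, following from $V(u)\ls\|u\|_{L^{12/5}}^4\ls\|u\|_{H^{1/2}}^4$ by the Hardy--Littlewood--Sobolev and Sobolev inequalities.

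Next I would symmetrize: replacing $u_n$ by its symmetric decreasing rearrangement $u_n^*$ does not decrease $V$ --- by the Riesz rearrangement inequality, since $|x|^{-1}$ is symmetric decreasing, so $V(u_n^*)\geq V(u_n)=1$ --- and does not increase $F_\mu$. For the latter the $L^2$-part is invariant, and for the nonlocal part one uses the subordination formula
\[
\lb u,\sqrt{-\Delta+\mu^{-2}}\,u\rb = c_0\int_0^\infty t^{-3/2}\big(\|u\|_{L^2}^2-\lb u,e^{-t(-\Delta+\mu^{-2})}u\rb\big)\,dt
\]
together with the fact that $e^{-t(-\Delta+\mu^{-2})}$ is convolution against a positive, radially decreasing kernel, so that $\lb u^*,e^{-t(-\Delta+\mu^{-2})}u^*\rb\geq\lb u,e^{-t(-\Delta+\mu^{-2})}u\rb$, again by Riesz rearrangement; integrating gives $F_\mu(u_n^*)\leq F_\mu(u_n)$. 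Rescaling $v_n=u_n^*/V(u_n^*)^{1/4}$ restores $V(v_n)=1$ without increasing $F_\mu$ (as $F_\mu$ is $2$-homogeneous and $V(u_n^*)^{1/2}\geq1$), so $(v_n)$ is a minimizing sequence of nonnegative radial functions, still bounded in $H^{1/2}$.

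By the compact radial Sobolev embedding $H^{1/2}_{rad}(\R^3)\hookrightarrow\hookrightarrow L^q(\R^3)$ for $2<q<3$, a subsequence of $(v_n)$ converges strongly in $L^{12/5}$ to some $T_\mu\geq0$ and weakly in $H^{1/2}$. Since $V$ is a bounded --- hence continuous --- quartic form on $L^{12/5}$ (Hardy--Littlewood--Sobolev), $V(T_\mu)=\lim_n V(v_n)=1$; and since $F_\mu$ is a convex quadratic form it is weakly lower semicontinuous, so $F_\mu(T_\mu)\leq\liminf_n F_\mu(v_n)=F^*_\mu$. The constraint then forces $F_\mu(T_\mu)=F^*_\mu$, so $T_\mu$ is a radial nonnegative minimizer.

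Finally, since $\lb V'(T_\mu),T_\mu\rb=4V(T_\mu)=4\neq0$, the Lagrange multiplier rule gives $\theta\in\R$ with $\sqrt{-\Delta+\mu^{-2}}\,T_\mu+T_\mu=\theta(|x|^{-1}\ast|T_\mu|^2)T_\mu$ in $H^{-1/2}$; pairing this equation with $T_\mu$ yields $\theta=\theta V(T_\mu)=F_\mu(T_\mu)=F^*_\mu>0$, which fixes the sign of the multiplier. Positivity $T_\mu>0$ then follows from the integral identity $T_\mu=(\sqrt{-\Delta+\mu^{-2}}+1)^{-1}\big[\theta(|x|^{-1}\ast|T_\mu|^2)T_\mu\big]$, using that the resolvent is positivity improving (again via subordination) and that the source is nonnegative and not identically zero. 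The rescaled function $R_\mu:=\theta^{1/2}T_\mu$ solves \eqref{eq:Rmu}, since both $\sqrt{-\Delta+\mu^{-2}}R_\mu+R_\mu$ and $(|x|^{-1}\ast|R_\mu|^2)R_\mu$ equal $\theta^{3/2}(|x|^{-1}\ast|T_\mu|^2)T_\mu$. I expect the main obstacle to be the compactness step: on $\R^3$ a minimizing sequence can lose mass at spatial infinity, and the rearrangement step is essential to reduce matters to radial decreasing profiles where the compact embedding applies; the one genuinely technical point there is verifying that rearrangement does not increase $F_\mu$ for the fractional operator, which is the subordination/Riesz-rearrangement argument above.
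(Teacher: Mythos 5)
Your proposal is correct and follows essentially the same route as the paper: direct minimization with symmetric decreasing rearrangement (using $F_\mu(u^*)\leq F_\mu(u)$, $V(u^*)\geq V(u)$ and the $2$- vs.\ $4$-homogeneity to renormalize the constraint), compactness for the radial sequence, weak lower semicontinuity, and the Lagrange multiplier identity $\theta=F_\mu(T_\mu)=F_\mu^*>0$ obtained by pairing with $T_\mu$, with positivity via a positivity-improving resolvent argument. The only (harmless) difference is the compactness step: the paper upgrades $L^{12/5}_{\mathrm{loc}}$ to global $L^{12/5}$ convergence through the elementary decay bound $|u_n(x)|\lesssim |x|^{-3/2}$ valid for radially decreasing $L^2$-bounded functions, whereas you invoke the compact embedding $H^{1/2}_{rad}(\R^3)\hookrightarrow\hookrightarrow L^q(\R^3)$, $2<q<3$, which is a legitimate (quotable) alternative since your rearranged sequence is radial.
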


\begin{proof}
This follows from standard variational arguments. We provide a brief sketch of the main steps, where we use rearrangement inequalities to gain compactness of minimizing sequences. 

For $\sigma \geq 1$ given, we embed \eqref{eq:Fmin} into the family of variational problems
$$
I_\mu(\sigma) = \inf \{ F_\mu(u) : u \in H^{1/2}, \; V(u) = \sigma \} . 
$$ 
Clearly, we have that $I_\mu(\sigma) \geq 0$. Moreover, by scaling, we readily check that $I_\mu(\sigma) = \sigma^{1/2} I_\mu(1)$ and hence $I_\mu(\sigma) \geq I_\mu(1)$ for $\sigma \geq 1$. Now, by rearrangement inequalities (see, e.\,g., \cite{LL01}) we have that $F_\mu(u^*) \leq F_\mu(u)$ and $V(u^*) \geq V(u)$, where $u^*$ denotes the symmetric-decreasing rearrangement of $u  \in H^{1/2}$. Therefore, any minimizing sequence $(u_n)$ for problem \eqref{eq:Fmin} can be replaced by $(u_n^*)$ without loss of generality. Hence we assume $u_n^* = u_n$ from now on. Since $F_\mu(u_n) \to F_\mu^*$, we have that $\| u_n \|_{H^{1/2}} \lesssim 1$. Thus, by passing to a subsequence, we can assume that $u_n \rightharpoonup u_*$ weakly in $H^{1/2}$ and $u_n \to u_*$ strongly in $L^{p}_{\mathrm{loc}}$ for $p \in [1, 3)$. Furthermore, by using that $\| u_n \|_{L^2} \lesssim 1$ and the fact that $u_n = u_n(|x|)$ are radial and monotone decreasing in $|x|$, we deduce the pointwise bound $|u_n(x)| \lesssim |x|^{-3/2}$. Using this decay bound together with $u_n \to u_*$ strongly in $L^{12/5}_{\mathrm{loc}}$, we deduce that $u_n \to u_*$ strongly in $L^{12/5}$. Thus, by the Hardy-Littlewood-Sobolev inequality, this implies that $V(u_n) \to V(u_*)=1$. In particular, we have that $u_* \not \equiv 0$ holds. Next, by the lower semi-continuity of $F_\mu$ with respect to weak convergence in $H^{1/2}$, we deduce that $\lim_{n \to \infty} F_\mu(u_n) \geq F_\mu(u_*)$. Hence $T_\mu:=u_* \geq 0$ is a radial and nonnegative minimizer for problem \eqref{eq:Fmin} and satisfies the corresponding Euler-Lagrange equation with some mutliplier $\theta \in \R$. The positivity of $\theta > 0$ follows from integrating the Euler-Lagrange equation against $T_\mu$. In fact, we easily see that $\theta \leq 0$ is not possible for $T_\mu \not \equiv 0$.  Finally, we note that $T_\mu(x) > 0$ is in fact positive by adapting an argument in \cite{L07}.\end{proof}

Next we derive uniform bounds for the minimizers $T_\mu$ given above when taking in the limit $\mu \to \infty$. Hence, without loss generality, we consider the case $\mu \geq 1$, say. We have the following uniform $L^2$ bounds.

\begin{lem} \label{lem:Rmu_bounds}
Suppose that $\mu \geq 1$ and let $R_\mu = \theta^{1/2} T_\mu \in H^{1/2}(\R^3)$ be given as in Lemma \ref{lem:Tmu} above. Then we have the uniform bounds
$$
1 \lesssim \| R_\mu \|_{L^2} \leq \| R_\mu \|_{H^{1/2}} \lesssim 1 .
$$
\end{lem}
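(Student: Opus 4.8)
The plan is to reduce everything to two-sided \emph{uniform} bounds $1 \ls F^*_\mu \ls 1$ for $\mu \geq 1$, since all quantities in the statement can be expressed through $F^*_\mu$. First I would record the identity $\theta = F^*_\mu$: testing the Euler--Lagrange equation for $T_\mu$ from Lemma \ref{lem:Tmu} against $\overline{T_\mu}$ gives $F_\mu(T_\mu) = \theta\, V(T_\mu) = \theta$, and $F_\mu(T_\mu) = F^*_\mu$ since $T_\mu$ is a minimizer for \eqref{eq:Fmin}. Using that $F_\mu$ is quadratic and $V$ is quartic, together with $R_\mu = \theta^{1/2}T_\mu$, this yields
\[
F_\mu(R_\mu) = V(R_\mu) = \theta^2 = (F^*_\mu)^2 ,
\]
so it indeed suffices to control $F^*_\mu$ from above and below uniformly in $\mu \geq 1$.

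The key comparison is between $F_\mu$ and the fixed norm $\|\cdot\|_{H^{1/2}}$. For $\mu \geq 1$ one has $|\xi| \leq \sqrt{|\xi|^2 + \mu^{-2}} \leq \sqrt{|\xi|^2+1}$, hence $\lb\xi\rb \leq |\xi|+1 \leq \sqrt{|\xi|^2+\mu^{-2}}+1$, and on the Fourier side this gives
\[
\|u\|_{H^{1/2}}^2 \leq F_\mu(u) \leq F_1(u) \ls \|u\|_{H^{1/2}}^2
\]
for all $u \in H^{1/2}$ and all $\mu \geq 1$. The upper bound $F^*_\mu \leq F_\mu(u_0) \leq F_1(u_0) \ls 1$ follows by testing \eqref{eq:Fmin} with a fixed radial $u_0 \in H^{1/2}$ normalized so that $V(u_0) = 1$. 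For the lower bound, the Hardy--Littlewood--Sobolev inequality gives $V(u) \ls \|u\|_{L^{12/5}}^4$ and the Sobolev embedding $H^{1/2}(\R^3) \hookrightarrow L^{12/5}(\R^3)$ gives $\|u\|_{L^{12/5}} \ls \|u\|_{H^{1/2}}$; applying this to $u = T_\mu$ and using $\|T_\mu\|_{H^{1/2}}^2 \leq F_\mu(T_\mu) = F^*_\mu$ yields $1 = V(T_\mu) \ls (F^*_\mu)^2$, i.e. $F^*_\mu \gs 1$.

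With $1 \ls F^*_\mu \ls 1$ established, the conclusions follow at once: $\|R_\mu\|_{H^{1/2}}^2 \leq F_\mu(R_\mu) = (F^*_\mu)^2 \ls 1$, and $\|R_\mu\|_{L^2} \leq \|R_\mu\|_{H^{1/2}}$ by definition of the norms. For the remaining lower bound $\|R_\mu\|_{L^2} \gs 1$, I would interpolate $\|R_\mu\|_{L^{12/5}} \leq \|R_\mu\|_{L^2}^{1/2}\|R_\mu\|_{L^3}^{1/2}$, use $\|R_\mu\|_{L^3} \ls \|R_\mu\|_{H^{1/2}} \ls 1$, and combine $V(R_\mu) = (F^*_\mu)^2 \gs 1$ with $V(R_\mu) \ls \|R_\mu\|_{L^{12/5}}^4 \ls \|R_\mu\|_{L^2}^2\|R_\mu\|_{L^3}^2 \ls \|R_\mu\|_{L^2}^2$. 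There is no real obstacle here; the only point demanding care is the uniformity in $\mu$, which is exactly why the equivalence $F_\mu \sim F_1 \sim \|\cdot\|_{H^{1/2}}^2$ — valid precisely because $\mu \geq 1$ keeps $\mu^{-2}$ bounded — is the crux of the argument (for $\mu \to 0$ the symbol $\sqrt{|\xi|^2+\mu^{-2}}$ would blow up and uniform bounds would be lost).
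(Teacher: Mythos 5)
Your proposal is correct, and the upper-bound half runs parallel to the paper's: both test the minimization problem \eqref{eq:Fmin} with a fixed trial function (you use the monotonicity $F_\mu \leq F_1$ for $\mu \geq 1$, the paper uses $\sqrt{-\Delta+\mu^{-2}} \leq \sqrt{-\Delta}+\mu^{-1}$), and both extract $\theta = F_\mu(T_\mu)$ from the Euler--Lagrange equation to control $\|R_\mu\|_{H^{1/2}} \lesssim 1$. Where you genuinely diverge is the lower bound $\|R_\mu\|_{L^2} \gtrsim 1$. The paper argues spectrally: the operator $H = \sqrt{-\Delta+\mu^{-2}} - (|x|^{-1}\ast|R_\mu|^2)$ has $-1$ as an eigenvalue with eigenfunction $R_\mu$, and if $\|R_\mu\|_{L^2}^2 \leq 2/\pi$ then Newton's theorem (which needs the radiality of $R_\mu$) plus the relativistic Hardy inequality $|x|^{-1} \leq \tfrac{\pi}{2}\sqrt{-\Delta}$ would force $H \geq 0$, a contradiction; this yields the explicit threshold $\|R_\mu\|_{L^2}^2 > 2/\pi$. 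You instead exploit the variational structure: from $\theta = F^*_\mu$ and homogeneity you get $V(R_\mu) = (F^*_\mu)^2$, you bound $F^*_\mu \gtrsim 1$ uniformly via Hardy--Littlewood--Sobolev and the embedding $H^{1/2}(\R^3) \hookrightarrow L^{12/5}(\R^3)$ applied to the constraint $V(T_\mu)=1$, and then the interpolation $\|R_\mu\|_{L^{12/5}} \leq \|R_\mu\|_{L^2}^{1/2}\|R_\mu\|_{L^3}^{1/2}$ together with $\|R_\mu\|_{L^3} \lesssim \|R_\mu\|_{H^{1/2}} \lesssim 1$ converts $V(R_\mu) \gtrsim 1$ into $\|R_\mu\|_{L^2} \gtrsim 1$. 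Your route is more elementary and self-contained (only HLS, Sobolev, and interpolation; it does not use radiality, positivity, or any spectral input), while the paper's route produces an explicit universal constant; for the downstream use in Lemma \ref{lem:Rmu_CV} (ruling out $R \equiv 0$), either version suffices. All the individual steps you invoke check out, including the exponent bookkeeping ($p=q=6/5$ in HLS, $\theta=1/2$ in the $L^2$--$L^3$ interpolation) and the multiplier comparison $\langle\xi\rangle \leq \sqrt{|\xi|^2+\mu^{-2}}+1 \leq 2\langle\xi\rangle$ valid uniformly for $\mu \geq 1$.
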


\begin{proof}
We derive approriate bounds for $T_\mu \in H^{1/2}$ and $\theta = \theta(T_\mu) > 0$ as follows.

First, we claim that
\begin{equation} \label{ineq:uni1}
\| T_\mu \|_{H^{1/2}} \lesssim 1.
\end{equation}
To see this, we simply note that $T_\mu$ minimizes problem \eqref{eq:Fmin}. Therefore, by taking some fixed function $w \in H^{1/2}$ such that $V(w) =1$, we deduce that
\begin{align*}
F_\mu(T_\mu) & \leq F_\mu(w) \leq \int_{\R^3} \overline{w} \sqrt{-\Delta} w \, dx + (1+\mu^{-1}) \int_{\R^3} |w|^2 \, dx \\
& \lesssim 1 + \mu^{-1} \lesssim 1, 
\end{align*}
using the operator inequality $\sqrt{-\Delta + \mu^{-2}} \leq \sqrt{-\Delta} + \mu^{-1}$, which directly follows in Fourier space. From this we deduce that the bound \eqref{ineq:uni1} holds true. 

Next, we derive an upper bound for $\theta >0$ as follows. By integrating the Euler-Lagrange equation for $T_\mu$ against $T_\mu$, we deduce that
\begin{equation} \label{ineq:uni2}
 \theta = F_\mu(T_\mu) \lesssim 1,
\end{equation}
using that $V(T_\mu) = 1$. Combining \eqref{ineq:uni1} and \eqref{ineq:uni2}, we find that $R_\mu = \theta^{1/2} T_\mu$ satisfies
\begin{equation*}
\| R_\mu \|_{L^2}  \leq \| R_\mu \|_{H^{1/2}} \lesssim \theta^{1/2} \| T_\mu \|_{H^{1/2}} \lesssim 1 .
\end{equation*}

It remains to show the uniform lower 
\begin{equation} \label{ineq:uni3}
\| R_\mu \|_{L^2} \gtrsim 1.
\end{equation}
Indeed, this can be seen by exploiting a Hardy type inequality as follows. Let $H$ denote the self-adjoint operator
$$
H = \sqrt{-\Delta+\mu^{-2}} -  (|x|^{-1} \ast |R_\mu|^2).
$$
Note that $-1$ is an eigenvalue of $H$, since $H R_\mu = - R_\mu$ holds. Furthermore, by radiality of $R_\mu$ and Newton's theorem (see, e.\,g., \cite{LL01}), we have the pointwise bound 
$$
\int_{\R^3} \frac{|R_\mu(y)|^2}{|x-y|} \, dy \leq \frac{\| R_\mu \|_{L^2}^2}{|x|}.    
$$
Suppose now that $\| R_\mu \|_{L^2}^2 \leq (2/\pi)$ was true. In view of the simple fact that $\sqrt{-\Delta+\mu^{-2}} \geq \sqrt{-\Delta}$, we  obtain the operator inequality
$$
H \geq \sqrt{-\Delta} - \frac{  \| R_\mu \|_{L^2}^2}{|x|} \geq 0,
$$
by Hardy's inequality $|x|^{-1} \leq \frac{\pi}{2} \sqrt{-\Delta}$, see \cite{H77}. But the nonnegativity of $H$ contradicts the fact that $-1$ is an eigenvalue of $H$. Therefore we deduce that \eqref{ineq:uni3} holds, which completes the proof of Lemma \ref{lem:Rmu_bounds}. \end{proof}

Next, we derive the following strong convergence result for the family $R_\mu$.

\begin{lem} \label{lem:Rmu_CV}
Suppose that $\mu_n \to \infty$ as $n \to \infty$ and let $T_{\mu_n}  \in H^{1/2}(\R^3)$ be a sequence of minimizers as given by Lemma \ref{lem:Tmu}. Furthermore, we denote $R_{\mu_n} = \theta^{1/2}_n T_{\mu_n} \in H^{1/2}(\R^3)$ with $\theta_n = \theta(T_{\mu_n})$ be as above. Then, after passing to a subsequence if necessary, we have that
$$
\mbox{$R_{\mu_n} \to R$ strongly in $L^2(\R^3)$ as $n \to \infty$},
$$ 
where $R \in H^{1/2}(\R^3)$ is a positive radial solution of 
$$
\sqrt{-\Delta} \, R + R - ( |x|^{-1} \ast |R|^2)R = 0,
$$
and it holds that $\| R \|_{L^2}^2 \geq \| Q \|_{L^2}^2$ (i.\,e.~the critical $L^2$-mass for the boson star equation, see \eqref{eq:boson_ground}). 
\end{lem}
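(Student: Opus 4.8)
The plan is to carry out a concentration--compactness argument for the rescaled family $(R_{\mu_n})$, using the uniform bounds of Lemma~\ref{lem:Rmu_bounds} and the compactness of radial $H^{1/2}$-sequences at non-endpoint Lebesgue exponents, and then to upgrade the resulting weak convergence to strong $L^2$-convergence by exploiting the equation. By Lemma~\ref{lem:Rmu_bounds} the sequence $(R_{\mu_n})$ is bounded in $H^{1/2}(\R^3)$, so after extracting a subsequence we may assume $R_{\mu_n}\rightharpoonup R$ weakly in $H^{1/2}$; since each $R_{\mu_n}=\theta_n^{1/2}T_{\mu_n}$ is radial, nonnegative and symmetric-decreasing, the same properties pass to $R$. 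Exactly as in the proof of Lemma~\ref{lem:Tmu}, the uniform $L^2$-bound together with monotonicity gives the pointwise decay $|R_{\mu_n}(x)|\lesssim|x|^{-3/2}$, and combined with the local $L^{12/5}$-compactness of radial $H^{1/2}$-sequences this upgrades to $R_{\mu_n}\to R$ strongly in $L^{12/5}(\R^3)$; hence $V(R_{\mu_n})\to V(R)$ by the Hardy--Littlewood--Sobolev inequality. Pairing \eqref{eq:Rmu} with $R_{\mu_n}$ and discarding the nonnegative term $\langle\sqrt{-\Delta+\mu_n^{-2}}R_{\mu_n},R_{\mu_n}\rangle$ yields $V(R_{\mu_n})\ge\|R_{\mu_n}\|_{L^2}^2\gtrsim1$, so $V(R)>0$ and in particular $R\not\equiv0$.

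Next I would pass to the limit in \eqref{eq:Rmu}. Testing against $\psi\in\mathcal{S}(\R^3)$, the linear term converges because $\sqrt{-\Delta+\mu_n^{-2}}\,\psi\to\sqrt{-\Delta}\,\psi$ in $L^2$ (dominated convergence in Fourier space, using $\sqrt{|\xi|^2+\mu_n^{-2}}\le|\xi|+1$ for $\mu_n\ge1$) together with $R_{\mu_n}\rightharpoonup R$, while for the nonlinear term one uses $|x|^{-1}\ast|R_{\mu_n}|^2\to|x|^{-1}\ast|R|^2$ in $L^6$ (Hardy--Littlewood--Sobolev together with $|R_{\mu_n}|^2\to|R|^2$ in $L^{6/5}$) and $R_{\mu_n}\to R$ in $L^{12/5}$. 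Thus $R\in H^{1/2}$ solves $\sqrt{-\Delta}\,R+R-(|x|^{-1}\ast|R|^2)R=0$, distributionally and hence classically by a standard bootstrap. Strict positivity $R>0$ then follows by writing $R=(\sqrt{-\Delta}+1)^{-1}\big[(|x|^{-1}\ast|R|^2)R\big]$ and invoking the positivity-improving property of $(\sqrt{-\Delta}+1)^{-1}$, as in \cite{L07}.

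The crucial step is to rule out loss of $L^2$-mass in the limit. Set $a_n=\langle\sqrt{-\Delta+\mu_n^{-2}}R_{\mu_n},R_{\mu_n}\rangle$ and $b_n=\|R_{\mu_n}\|_{L^2}^2$, so that $a_n+b_n=V(R_{\mu_n})\to V(R)=:v$. Since $\sqrt{-\Delta+\mu_n^{-2}}\ge\sqrt{-\Delta}$ and the homogeneous norm $\|\cdot\|_{\dot H^{1/2}}$ is weakly lower semicontinuous, $\liminf_n a_n\ge\|R\|_{\dot H^{1/2}}^2=:A$, and likewise $\liminf_n b_n\ge\|R\|_{L^2}^2=:B$; pairing the limit equation with $R$ gives $A+B=v$. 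Together these force $\lim_n b_n=B$: along a subsequence realizing $\limsup_n b_n$ one has $a_n\to v-\limsup_n b_n$, which must be $\ge\liminf_n a_n\ge A$, so $\limsup_n b_n\le v-A=B$. Combined with $R_{\mu_n}\rightharpoonup R$ weakly in $L^2$, this gives $R_{\mu_n}\to R$ strongly in $L^2(\R^3)$. I expect this to be the main obstacle, because bounds on radial $H^{1/2}$-sequences give compactness only for Lebesgue exponents $p$ with $2<p<3$, never for $L^2$ itself, so strong convergence cannot come from soft arguments and must instead be extracted from the equation via the balance $a_n+b_n=V(R_{\mu_n})$.

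Finally, to obtain $\|R\|_{L^2}^2\ge\|Q\|_{L^2}^2$, I would combine the dilation (Pohozaev) identity for the equation solved by $R$ with the identity from pairing against $R$; this yields $\|R\|_{\dot H^{1/2}}^2=\|R\|_{L^2}^2$ and $V(R)=2\|R\|_{L^2}^2$. The sharp inequality $V(u)\le C_*\,\|u\|_{\dot H^{1/2}}^2\,\|u\|_{L^2}^2$, whose optimal constant $C_*=2/\|Q\|_{L^2}^2$ is attained by the ground state $Q$ of \eqref{eq:boson_ground} (see \cite{LY87,L07}), then gives $2\|R\|_{L^2}^2=V(R)\le(2/\|Q\|_{L^2}^2)\|R\|_{L^2}^4$, i.e.\ $\|R\|_{L^2}^2\ge\|Q\|_{L^2}^2$. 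A minor point to check along the way is the uniform control of the operators $\sqrt{-\Delta+\mu_n^{-2}}$ and of the $L^{12/5}$-tails of $R_{\mu_n}$ as $\mu_n\to\infty$, but both are handled by the pointwise bound $|R_{\mu_n}(x)|\lesssim|x|^{-3/2}$ and the crude comparison $\sqrt{-\Delta}\le\sqrt{-\Delta+\mu_n^{-2}}\le\sqrt{-\Delta}+\mu_n^{-1}$ already used in Lemma~\ref{lem:Rmu_bounds}.
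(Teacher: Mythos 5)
Your argument is correct, but the mechanism you use for the decisive step (upgrading weak to strong $L^2$ convergence) is genuinely different from the paper's. The paper extracts a \emph{uniform spatial decay bound}: writing $\sqrt{-\Delta+\mu_n^{-2}}\,R_n=f_n$ with $f_n=(V_n-1)R_n$, it uses Newton's theorem to show $f_n^+$ is supported in a fixed ball, and then the explicit Bessel-kernel representation of $(-\Delta+\mu^{-2})^{-1/2}$ with the uniform bound $G_\mu(x,y)\lesssim|x-y|^{-4}$ to conclude $R_n(x)\lesssim|x|^{-4}$ for $|x|\gtrsim1$; this decay is square integrable at infinity, so strong $L^2_{\mathrm{loc}}$ convergence upgrades to strong $L^2$ directly. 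You instead run a norm-balance (Brezis--Lieb type) argument: strong $L^{12/5}$ convergence (from the radial/monotone decay $|x|^{-3/2}$ or, equivalently, the compact radial embedding) gives $V(R_n)\to V(R)$; the identities $a_n+b_n=V(R_n)$ and $A+B=V(R)$, together with $\sqrt{-\Delta+\mu_n^{-2}}\ge\sqrt{-\Delta}$ and weak lower semicontinuity, force $\|R_n\|_{L^2}\to\|R\|_{L^2}$, hence strong $L^2$ convergence. Both routes are sound; the paper's yields extra quantitative information (a uniform $|x|^{-4}$ pointwise bound on the whole family), while yours is softer and avoids kernel estimates entirely, at the price of leaning on the Euler--Lagrange identities and on the fact (implicit in the proof of Lemma \ref{lem:Tmu}, since $T_\mu$ arises as a limit of symmetric-decreasing rearrangements) that the $R_{\mu_n}$ are radially nonincreasing — a point you should state explicitly or replace by a citation of the compact embedding $H^{1/2}_{rad}\hookrightarrow L^p$, $2<p<3$. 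The remaining ingredients — nonvanishing via the lower bound of Lemma \ref{lem:Rmu_bounds}, passage to the limit in the equation, positivity of $R$, and the Pohozaev identity combined with the sharp interpolation inequality with $C_{\mathrm{opt}}=2/\|Q\|_{L^2}^2$ to get $\|R\|_{L^2}^2\ge\|Q\|_{L^2}^2$ — coincide with the paper's.
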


\begin{proof}
For notational convenience, we write $R_{n} = R_{\mu_n}$ in what follows.

By the bounds in Lemma \ref{lem:Rmu_bounds}, we can assume that $R_{n} \rightharpoonup R$ weakly in $H^{1/2}$. Moreover, by local Rellich compactness, we can have that $R_{n} \to R$ strongly in $L^2_{\mathrm{loc}}$. We will upgrade this to strong convergence in $L^2$, by deriving a uniform decay estimate for $R_{n}$ as follows. 

We rewrite the equation satisfied by $R_n$ as 
$$
\sqrt{-\Delta + \mu_n^{-2}} \, R_{n} = f_n
$$
with $f_n(x) = (V_n(x) -1) R_{n}(x)$ and $V_n(x) = (|\cdot|^{-1} \ast |R_{n}|^2)(x)$. Since $R_n$ are radial functions and $\| R_n \|_{L^2} \lesssim 1$, we derive from Newton's theorem \cite[Theorem 9.7]{LL01} the uniform pointwise  bound $V_n(x) \leq \| R_n \|_{L^2}^2 |x|^{-1} \lesssim |x|^{-1}$. Moreover, by the fact that $R_n(x) > 0$ is positive, we deduce that
$$
 f_{n}^+(x) := \max \{ 0, f_n(x) \} \equiv 0 \ \ \mbox{for} \ \ |x| \gtrsim 1.
$$
Now, let $G_\mu(x,y)$ be the convolution kernel associated to $(-\Delta+\mu^{-2})^{-1/2}$. From well-known facts (see e.g.\ \cite[p. 183, formula (11)]{LL01}) we have the explicit formula
\begin{align*}
G_\mu(x) & =  \int_0^{\infty} e^{-t \sqrt{-\Delta+\mu^{-2}}}(x,y) \, dt \\
& = \frac{\mu^{-4}}{2\pi^2} \int_0^{\infty} \frac{t}{t^2 + |x-y|^2} K_2 \left (\mu^{-2}  \sqrt{|x-y|^2+t^2} \right ) \, dt ,
\end{align*}
where $K_2$ is the modified Bessel function of the third kind. From the fact that $K_\nu(z) \lesssim |z|^{-\nu}$ for $\mbox{Re} \, \nu > 0$, we easily deduce the uniform bound
$$
0<G_\mu(x,y) \lesssim |x-y|^{-4} .
$$
Hence, by using the posivity of $R_n$ and the fact that $f_n^+$ have compact support in a fixed large ball independent of $n$, we obtain from $R_\mu = (-\Delta+\mu^{-2})^{-1/2} f_n$ that
\begin{align*}
0 < R_n(x) & \leq \int_{\R^3} G_{\mu_n}(x-y) f_n^+(y) \, dy \leq \int_{|y| \lesssim 1} G_{\mu_n} (x-y) f_n^+(y) \, dy  \\
& \leq |x|^{-4} \int_{|y| \lesssim 1} f_n^+(y) \, dy \lesssim |x|^{-4} \ \ \mbox{for} \ \ |x| \gtrsim 1.
\end{align*}
In the last step, we used that
$$
\int_{|y| \lesssim 1} f_{n}^+(y) \, dy \lesssim \int_{|y| \lesssim 1 } \frac{1}{|y|} R_n(y) \, dy \lesssim 1,
$$
thanks to Newton's theorem and $\| R_n \|_{L^2} \lesssim 1$ again.

In summary, we have shown that
\begin{equation*}
R_n(x) \lesssim |x|^{-4} \quad \mbox{for $|x| \gtrsim 1$ and $n \geq 1$}.
\end{equation*}
Using this decay bound (which is square integrable at infinity), we easily see that $R_n \to R$ stronlgy in $L^2_{\mathrm{loc}}$ implies that 
\begin{equation*}
\mbox{$R_n \to R$ stronlgy in $L^2(\R^3)$ as $n \to \infty$}.
\end{equation*}
In view of the lower bound in Lemma \ref{lem:Rmu_bounds}, we deduce that $R \not \equiv 0$ holds. By passing to the limit in the equation satisfied by $R_n$, we deduce that $R$ satisfies the equation displayed in Lemma \ref{lem:Rmu_CV}.

Finally, integrating the equation satisfied by $R$ against \[\Lambda R = x \cdot \nabla R + \frac{3}{2} R=\frac{\mathrm{d}}{\mathrm{d}a} a^{\frac32}R(ax)\Big|_{a=1}\] we obtain that 
$$
\int_{\R^3} R \sqrt{-\Delta} \, R \, dx = \frac{1}{2} \int_{\R^3}  ( |x|^{-1} \ast |R|^2 ) |R|^2 \, dx.
$$
Recall the interpolation estimate
$$
\int_{\R^3} ( |x|^{-1} \ast |f|^2 ) |f|^2 \,dx \leq C_{\mathrm{opt}} \left ( \int_{\R^3} \overline{f} \sqrt{-\Delta} f \right ) \left ( \int_{\R^3} |f|^2 \right )
$$
for all $f \in H^{1/2}$, where the optimal constant is given by $C_{\mathrm{opt}} = 2/ \| Q \|_{L^2}^2$; see, e.\,g., \cite[Appendix]{L07} for this fact. Hence we deduce that $\| R \|_{L^2}^2 \geq \| Q \|_{L^2}^2$. This completes the proof of Lemma \ref{lem:Rmu_CV}. \end{proof}

Having the result of Lemma \ref{lem:Rmu_CV} at hand, we can now prove the following illposedness following in the case of positive mass parameter $m>0$.

\begin{pro} \label{pro:uniform2}
Suppose that $m>0$ holds in \eqref{eq:boson-star}, and let $t>0$. Then the map $\phi \mapsto u(t)$ fails to be uniformly continuous for initial data in the set \[M = \{ \phi \in L^2 : \phi \text{ radial }, \| \phi \|_{L^2}^2 \geq K \}\] with respect to the $L^2$-norm, where $K \geq \| Q \|_{L^2}^2$ is some universal constant.
\end{pro}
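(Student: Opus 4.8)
The plan is to rerun the argument of Proposition~\ref{pro:uniform}, with the exact scaling profile $Q$ replaced by the asymptotic profile $R$ supplied by Lemma~\ref{lem:Rmu_CV}. With $m=1$ as arranged above, I would use the solitary wave solutions $u_\mu(t,x)=e^{it\mu}Q_\mu(x)$ of \eqref{eq:boson-star}, where $Q_\mu=\theta^{1/2}T_\mu\in H^{1/2}(\R^3)$ is the positive radial solution of \eqref{eq:Qmu} from Lemma~\ref{lem:Tmu}. For $\mu_1,\mu_2>0$ put $I_{\mu_1,\mu_2}(t)=\|u_{\mu_1}(t)-u_{\mu_2}(t)\|_{L^2}^2$. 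Since each $Q_\mu$ is real and positive, expanding the square gives
\[
I_{\mu_1,\mu_2}(t)=\|Q_{\mu_1}\|_{L^2}^2+\|Q_{\mu_2}\|_{L^2}^2-2\cos\bigl(t(\mu_1-\mu_2)\bigr)\,\langle Q_{\mu_1},Q_{\mu_2}\rangle ,
\]
and, using $Q_\mu(x)=\mu^{3/2}R_\mu(\mu x)$ together with the change of variables $y=\mu_1 x$,
\[
\|Q_\mu\|_{L^2}=\|R_\mu\|_{L^2},\qquad \langle Q_{\mu_1},Q_{\mu_2}\rangle=\Bigl(\tfrac{\mu_2}{\mu_1}\Bigr)^{3/2}\int_{\R^3}R_{\mu_1}(y)\,R_{\mu_2}\!\Bigl(\tfrac{\mu_2}{\mu_1}y\Bigr)\,dy .
\]

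Now fix $t>0$. I would pick a sequence $\beta_n\to\infty$, set $\alpha_n:=\beta_n+\pi/t$ (so that $\alpha_n/\beta_n\to1$ and $t(\alpha_n-\beta_n)=\pi$), and pass to a subsequence along which $R_{\alpha_n}\to R$ and $R_{\beta_n}\to R$ strongly in $L^2(\R^3)$, with $R\in H^{1/2}(\R^3)$ the positive radial solution of \eqref{eq:boson_ground} from Lemma~\ref{lem:Rmu_CV} — the crucial point being that both sequences converge to the \emph{same} profile $R$. Since $\alpha_n/\beta_n\to1$, the strong continuity of dilations on $L^2$ turns the two displayed identities into
\[
\|Q_{\alpha_n}\|_{L^2}^2,\ \|Q_{\beta_n}\|_{L^2}^2\longrightarrow\|R\|_{L^2}^2,\qquad \langle Q_{\alpha_n},Q_{\beta_n}\rangle\longrightarrow\|R\|_{L^2}^2 .
\]
Hence $I_{\alpha_n,\beta_n}(0)\to0$, while $I_{\alpha_n,\beta_n}(t)\to 2\|R\|_{L^2}^2-2\cos(\pi)\|R\|_{L^2}^2=4\|R\|_{L^2}^2$, which is strictly positive by the lower bound $\|R_\mu\|_{L^2}\gs1$ of Lemma~\ref{lem:Rmu_bounds}. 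Choosing $K\ge\|Q\|_{L^2}^2$ so that $\phi_n:=Q_{\alpha_n}$ and $\psi_n:=Q_{\beta_n}$ lie in $M$ for large $n$ — possible since $\|Q_{\alpha_n}\|_{L^2}^2,\|Q_{\beta_n}\|_{L^2}^2\to\|R\|_{L^2}^2\ge\|Q\|_{L^2}^2$ by Lemma~\ref{lem:Rmu_CV} — we get $\|\phi_n-\psi_n\|_{L^2}^2=I_{\alpha_n,\beta_n}(0)\to0$, whereas the corresponding solutions obey $\|u_{\alpha_n}(t)-u_{\beta_n}(t)\|_{L^2}^2=I_{\alpha_n,\beta_n}(t)\to 4\|R\|_{L^2}^2>0$. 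This is incompatible with local uniform continuity of $\phi\mapsto u(t)$ on $M$.

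I expect the main obstacle to be precisely the step where the case $m>0$ departs from Proposition~\ref{pro:uniform}: because \eqref{eq:boson-star} no longer enjoys exact scaling, I must know that $Q_{\alpha_n}$ and $Q_{\beta_n}$, after rescaling, approach \emph{one and the same} profile $R$ — otherwise the cross term $\langle Q_{\alpha_n},Q_{\beta_n}\rangle$ would fall strictly short of $\|Q_{\alpha_n}\|_{L^2}\|Q_{\beta_n}\|_{L^2}$ in the limit and the two sequences of initial data would fail to come close. Lemma~\ref{lem:Rmu_CV} on its own only yields subsequential limits, which a priori could differ for $\{\alpha_n\}$ and $\{\beta_n\}$; to rule this out I would use that $\{R_\mu:\mu\ge1\}$ is precompact in $L^2$ — from the uniform $H^{1/2}$--bound of Lemma~\ref{lem:Rmu_bounds} and the uniform decay bound established in the proof of Lemma~\ref{lem:Rmu_CV} — together with the fact that, as $\mu\to\infty$, the minimisation problems \eqref{eq:Fmin} converge to the massless problem whose (rescaled) minimiser is the ground state of \eqref{eq:boson_ground}; this forces $R_\mu\to R$ along the full family $\mu\to\infty$, hence along $\{\alpha_n\}$ and $\{\beta_n\}$ simultaneously. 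Everything else — the expansion of $I_{\mu_1,\mu_2}$, the dilation continuity, and the harmless latitude in $K$ (which merely reflects that for $m>0$ the solitary-wave mass $\|Q_\mu\|_{L^2}$ depends on $\mu$) — should be routine.
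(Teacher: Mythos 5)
Your setup coincides with the paper's: the same solitary waves $e^{it\mu}Q_\mu$ with $Q_\mu=\mu^{3/2}R_\mu(\mu\cdot)$ from Lemma \ref{lem:Tmu}, the same expansion of $I_{\mu_1,\mu_2}(t)$, and the same use of Lemmas \ref{lem:Rmu_bounds} and \ref{lem:Rmu_CV}; your phase choice $\alpha_n=\beta_n+\pi/t$ (giving $\cos=-1$) versus the paper's $\mu(n)=\frac{\pi}{2t}n^2$ with consecutive pairs (giving $\cos=0$) is an immaterial variation, and your computation of the cross term is correct.

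The genuine gap is in the step you yourself flag as the main obstacle. To make $I_{\alpha_n,\beta_n}(0)\to 0$ you need $R_{\alpha_n}$ and $R_{\beta_n}$ to converge (along a common subsequence) to the \emph{same} profile, and you propose to obtain this from convergence of the full family $R_\mu\to R$ as $\mu\to\infty$. That does not follow from what is available: precompactness of $\{R_\mu\}$ in $L^2$ only yields subsequential limits, and these limits could a priori depend on the subsequence. Upgrading to full-family convergence would require (i) showing that every subsequential limit is a minimizer of the limiting massless variational problem --- whereas Lemma \ref{lem:Rmu_CV} only identifies it as \emph{some} positive radial solution of $\sqrt{-\Delta}\,R+R-(|x|^{-1}\ast|R|^2)R=0$ with $\|R\|_{L^2}^2\geq\|Q\|_{L^2}^2$ --- and (ii) uniqueness of that limiting object (up to the symmetries already fixed), which is a substantial theorem in its own right, neither proved nor cited in this paper; "the minimisation problems \eqref{eq:Fmin} converge to the massless problem" does not by itself force a unique limit point. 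The paper avoids this demand: it works with a single sequence $\mu(n)=\frac{\pi}{2t}n^2$, whose consecutive pairs automatically satisfy the phase condition, and invokes Lemma \ref{lem:Rmu_CV} once along a subsequence, so it never claims (nor needs) convergence of the whole family. If you keep your two-sequence setup, you must either prove the common-limit property (which is essentially the uniqueness statement you are implicitly assuming) or restructure the selection of frequency pairs as in the paper so that only one application of the subsequential compactness is required.
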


\begin{proof}
Recall that we can assume $m=1$ without loss of generality. For $\mu_1, \mu_ 2 > 0$ given, we consider the solitary wave solutions
$$
u_{\mu_1}(t,x) = e^{it \mu_1}Q_{\mu_1}(t,x), \quad u_{\mu_2}(t,x) = e^{it \mu_2} Q_{\mu_2}(t,x),
$$
where $Q_\mu(x)$ are radial positive solutions to \eqref{eq:Qmu} that are given by $Q_\mu=\mu^{3/2} R_\mu(\mu x)$ with $R_\mu$ taken from Lemma \ref{lem:Tmu}. Following the proof of Proposition \ref{pro:uniform}, we define 
$$
I_{\mu_1, \mu_2}(t) = \| u_{\mu_1}(t) - u_{\mu_2}(t) \|_{L^2}^2.
$$
Similarly as above, we find that
\begin{align*}
I_{\mu_1, \mu_2}(t) & =  \|Q_{\mu_1} \|_{L^2}^2 + \| Q_{\mu_2} \|_{L^2}^2 \\
& \quad - 2 \cos (t (\mu_1-\mu_2)) \left( \frac{\mu_1}{\mu_2} \right )^{3/2} \int_{\R^3} R_{\mu_1}(x) R_{\mu_2} \left ( \left ( \frac{\mu_1}{\mu_2} x \right ) \right ) \, dx
\end{align*}  
Now let $t> 0$ be given. Define the sequence $\mu(n) = \frac{\pi}{2t} n^2$ for $n \in \N$, which ensure that $\cos(t (\mu(n+1) - \mu(n)) = 0$ for all $n \in \N$. By Lemma \ref{lem:Rmu_CV} and after possibly passing to a subsequence, we have that $R_{\mu(n)} \to R$ strongly in $L^2$ with a radial positive function $R \not \equiv 0$.   Hence, we conclude
$$
\lim_{n \to \infty} I_{\mu(n+1), \mu(n)} (t) = 2 \| R \|_{L^2}^2 \neq 0 .
$$
On the other hand, we have
\begin{align*}
&\lim_{n \to \infty} I_{\mu(n+1), \mu(n)} (0) \\
={}& 2 \| R \|_{L^2}^2- 2 \lim_{n \to \infty} \Big( \frac{\mu(n+1)}{\mu(n)} \Big)^{\frac32} \int_{\R^3} R_{\mu(n)}(x) R_{\mu(n+1)} \Big(\frac{\mu(n+1)}{\mu(n)} x   \Big)\, dx \\
={}& 2 \| R \|_{L^2}^2 - 2 \int_{\R^3} |R|^2 \, dx = 0.
\end{align*}
This completes the proof of Proposition \ref{pro:uniform2}. \end{proof}

\bibliographystyle{plain} \bibliography{refs-hartree}
\end{document}